\numberwithin{equation}{section}
\newtheorem{thm}{Theorem}[section]
\newtheorem{lemma}[thm]{Lemma}
\newtheorem{prop}[thm]{Proposition}
\newtheorem{cor}[thm]{Corollary}
\theoremstyle{definition}
\newtheorem{exam}[thm]{Example}
\newtheorem{define}[thm]{Definition}
\newcommand{\image}[1]{\mathrm{Im}\: #1}                      
\newcommand{\kernel}[1]{\mathrm{Ker}\: #1}                    
\newcommand{\homo}[3]{\mathrm{Hom}_{#1}(#2,#3)}               
\newcommand{\enmo}[2]{\mathrm{End}_{#1}(#2)}                  
\newcommand{\aut}[2]{\mathrm{Aut}_{#1}(#2)}                  
\newcommand{\socle}[1]{S(#1)}                        
\newcommand{\jac}[1]{J(#1)}                        
\newcommand{\kjac}[2]{J^{#1}(#2)}
\newcommand{\pd}[1]{\mathrm{proj.dim}\: #1}
\newcommand{\leng}[1]{\mathrm{length}(#1)}                 
\newcommand{\ext}[4]{\mathrm{Ext}^{#1}_{#2}(#3,#4)}           
\newcommand{\tilt}[1]{\mathrm{tilt}(#1)}                      
\newcommand{\utri}[2]{\mathrm{T}_{#1}(#2)}                   
\newcommand{\fac}[3]{P_{#1,#2,#3}}                          
\newcommand{\lfac}[2]{\overline{P}_{#1,#2}}
\begin{document}
\title{The classification of tilting modules over Harada algebras}
\author{Kota Yamaura}
\address{Graduate School of Mathematics\\ Nagoya University\\ Frocho\\ Chikusaku\\ Nagoya\\ 464-8602\\ Japan}
\email{m07052d@math.nagoya-u.ac.jp}
\date{}
\maketitle

\begin{abstract}
In the 1980s,  Harada introduced a new class of algebras now called Harada algebras .
Harada algebras provides us with a rich source of Auslander's $1$-Gorenstein algebras. 
In this paper, we have two main results about Harada algebras.
The first is the classification of modules over Harada algebras whose projective dimension is at most one. 
The second is the classification of tilting modules over Harada algebras, 
which is shown by giving a bijection between tilting modules over Harada algebras and tilting modules over 
direct products of upper triangular matrix algebras over $K$. 
A combinatorial description of tilting modules over  upper triangular matrix algebras over $K$ is known. 
These facts allow us to classify tilting modules over a given Harada algebra.
\end{abstract}

\section{Main results}
Two classes of algebras have been studied for a long time. 
The first is Nakayama algebras and the second is quasi-Frobenius algebras. 
In the 1980s,  Harada introduced a new class of algebras now called Harada algebras, 
which give a common generalization of Nakayama algebras and quasi-Frobenius algebras. 
Many authors have studied the structure of Harada algebras (e.g. \cite{quasi_H,Lec_N,dua_H,LE_QF,LE_gur,OHr1,OHr2,OHr3}). 
Now let us recall that left Harada algebras are defined from a structural point of view as follows.

\begin{define}\label{def_Harada}
Let $R$ be a basic algebra and $\mathrm{Pi}(R)$ be a complete set of
orthogonal primitive idempotents of $R$ \cite{AF}.
We call $R$ a \emph{left Harada algebra} if $\mathrm{Pi}(R)$ can be arranged such that $\mathrm{Pi}(R)=\{e_{ij}\}_{i=1}^{m},_{j=1}^{n_i}$
where
\begin{enumerate}
\def\labelenumi{(\theenumi)}
\item $e_{i1}R$ is an injective $R$-module for any $i=1,\cdots,m$,
\item $e_{ij}R \simeq e_{i,j-1}\jac{R}$ for any $i=1,\cdots,m$, $j=2,\cdots,n_i$.
\end{enumerate}
Here $\jac{R}$ is the Jacobson radical of $R$.
\end{define}

In \cite{Thr}, Thrall studied three properties of quasi-Frobenius algebras, called QF-$1$, QF-$2$, and QF-$3$. 
It follows from definition that left Harada algebras satisfy the property QF-$3$ which is the condition that the injective hull of the algebra  is projective. 
This property is called $1$-Gorenstein by Auslander (and dominant dimension at least one by Tachikawa) \cite{AR,FGR,HI,IST,T},  and often plays an important role in the representation theory. 
Left Harada algebras form a class of $1$-Gorenstein algebras, and their indecomposable projective modules have "nice" structure.

In this paper, we classify tilting modules over left Harada algebras.
Tilting modules provide us a powerful tool in the representation theory of algebras and are due to \cite{APR,BB,BGP}.

\begin{define}\label{def_tilt}
Let $R$ be an algebra. An $R$-module $T$ is called a \emph{partial tilting module} if $T$ satisfies the following conditions. 
\begin{itemize}
\item[$(1)$] $\pd{T} \leq 1$.
\item[$(2)$] $\ext{1}{R}{T}{T}=0$.
\end{itemize}
Moreover, a partial tilting $R$-module $T$ is called a \emph{tilting module} if $T$ satisfies the following condition.
\begin{itemize}
\item[$(3)$] There exists an exact sequence
\[
0 \longrightarrow R_R \longrightarrow T_0 \longrightarrow T_1 \longrightarrow 0
\]
where $T_0$, $T_1 \in \mathrm{add}T$.
\end{itemize}
\end{define}

We can see from the above definition that tilting modules are a generalization of progenerators which appear in Morita's theorem.
Morita's theorem shows that any progenerator $P$ over an algebra $R$ induces a category equivalence between $\mathrm{mod}R$ and $\mathrm{mod}(\enmo{R}{P})$.
A generalization of Morita's theorem is the Brenner-Butler theorem. It says that any tilting module $T$ over an algebra $R$ induces two category equivalences
between certain full subcategories of $\mathrm{mod}R$ and of $\mathrm{mod}(\enmo{R}{T})$.
As a consequence of the Brenner-Butler theorem, some problems about $R$ can be shifted to those of $\enmo{R}{T}$ (for example, finiteness of global dimension).
By this reason, tilting modules are important for the study of algebras and 
a finding classification of tilting modules over a given algebra is an important problem in representation theory. 
The aim of this paper is to give a classification of tilting modules over a left Harada algebra.

Now we present the two main results of this paper. 
Let $R$ be a left Harada algebra as in Definition \ref{def_Harada}.  
We denote by $\jac{M}$ the Jacobson radical of the $R$-module $M$, by $\kjac{k}{M}$ the $k$-th Jacobson radical of $M$ and by $\socle{M}$ the socle of $M$.
We put
\begin{eqnarray}
P_{ij} := \kjac{j-1}{e_{i1}R} \simeq e_{ij}R \hspace{1cm} (1 \leq i \leq m, \ 1 \leq j \leq n_i) \label{projective}
\end{eqnarray}
for simplicity. 
Then we have a chain 
\[
P_{i1} \supset P_{i2} \supset \cdots \supset P_{in_i}
\]
of indecomposable projective $R$-modules.

The first main result is the classification of $R$-modules whose projective dimension is equal to one. 
The full subcategory of the module category over a $1$-Gorenstein algebra that consists of modules 
whose projective dimension is at most one is contravariantly finite (\cite{HI,IST}). 
Actually, we have a stronger result for left Harada algebra. Namely, 
the full subcategory of the module category over a left Harada algebra that consists of modules whose projective dimension is at most one is finite.
It is obvious that $\pd{(P_{ik}/P_{il})} = 1$. We can show converse.

\begin{thm} \label{classify_pd1}
A complete set of isomorphism classes of indecomposable $R$-modules whose projective dimension is equal to one is given as follows.
\[
\{ P_{ik}/P_{il}  \ | \ 1 \leq i \leq m, \ 1 \leq k < l \leq n_i \}.
\]
\end{thm}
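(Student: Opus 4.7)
The forward direction is straightforward. For any $(i,k,l)$ with $1\leq k<l\leq n_i$, the inclusion $P_{il}\subseteq P_{ik}$ provides a projective resolution of length one,
\[
0\to P_{il}\to P_{ik}\to P_{ik}/P_{il}\to 0,
\]
and $P_{ik}/P_{il}$ is not projective since it is a proper nonzero quotient of the indecomposable projective $P_{ik}$, so $\pd (P_{ik}/P_{il})=1$. Indecomposability will follow from uniseriality: since each $P_{ij}$ is local with unique maximal submodule $J(P_{ij})=P_{i,j+1}$, iterating shows that any submodule $N$ with $P_{il}\subseteq N\subseteq P_{ik}$ equals one of $P_{ik},P_{i,k+1},\ldots,P_{il}$. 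The resulting distinct composition factors $S_{ik},\ldots,S_{i,l-1}$ also distinguish the modules in the list pairwise.

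For the converse, let $M$ be indecomposable with $\pd M=1$ and fix a minimal projective resolution $0\to P_1\xrightarrow{f} P_0\to M\to 0$. The key structural input I plan to use is that every indecomposable projective $P_{ij}$ has simple socle equal to $\mathrm{soc}(P_{i,1})$: indeed $P_{ij}\subseteq P_{i,1}$, and $P_{i,1}$ is an indecomposable injective module and therefore has simple socle. A first consequence is that no indecomposable summand of $P_1$ is of the form $P_{i,1}$: being injective, such a summand would split off of $P_0$, violating the minimality of the resolution. So every summand of $P_1$ is some $P_{i',j'}$ with $j'\geq 2$.

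The principal obstacle is to show that after appropriate changes of basis in $P_0$ and $P_1$, the map $f$ becomes a direct sum of inclusions $P_{i_\alpha,l_\alpha}\hookrightarrow P_{i_\alpha,k_\alpha}$ with $k_\alpha<l_\alpha$. My plan is to analyze the induced socle injection $\mathrm{soc}(P_1)\hookrightarrow\mathrm{soc}(P_0)$, which respects isotypical decomposition by the simple-socle observation, and to use it to match indecomposable summands of $P_1$ with summands of $P_0$ by socle type. Lifting this matching from the socle level to the modules, and ruling out ``twisted'' embeddings that would obstruct block-diagonalization, is the delicate point; it relies on the injectivity of the $P_{i,1}$'s together with the fact that, because every indecomposable projective submodule of $P_{i,1}$ has socle $\mathrm{soc}(P_{i,1})$ and is therefore forced to be of type $i$, such submodules form (up to isomorphism) the chain $P_{i,1}\supset\cdots\supset P_{i,n_i}$.

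Once $f$ is in block-diagonal form we obtain $M\cong\bigoplus_\alpha P_{i_\alpha,k_\alpha}/P_{i_\alpha,l_\alpha}$, and indecomposability of $M$ forces a single summand, yielding $M\cong P_{ik}/P_{il}$ for some $i$ and some $k<l$, which completes the proof.
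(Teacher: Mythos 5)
The forward direction and the general outline of your converse are sound, and you correctly identify the key structural inputs: each $P_{ij}$ is a submodule of the indecomposable injective $P_{i1}$ and hence has simple socle $\socle{P_{i1}}$; no $P_{i1}$ can appear in $P_1$ by minimality; and the chain Lemma (any non-small submodule of $P_{i1}$ is some $P_{ij}$). These are exactly the ingredients the paper also relies on.

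However, there is a genuine gap at the step you yourself flag as ``the delicate point.'' You need to show that after changes of basis on both sides, the monomorphism $f\colon P_1\to P_0$ becomes a direct sum of inclusions $P_{i_\alpha,l_\alpha}\hookrightarrow P_{i_\alpha,k_\alpha}$, but your argument stops at a plan (``analyze the induced socle injection'', ``rule out twisted embeddings'') rather than an actual proof. The socle matching only tells you which pairs of indecomposable summands \emph{could} be linked by a monic component of $f$; it does not by itself produce the row and column operations that clear the off-diagonal entries, nor does it rule out, say, a single summand of $P_1$ mapping monically into two summands of $P_0$ with the same socle type. The paper closes exactly this gap with two technical lemmas: one (Lemma~\ref{key_lemma2}) shows that given a fixed monomorphism $P_{ij}\hookrightarrow P_{il}$, every other homomorphism out of $P_{ij}$ (respectively into $P_{il}$) of the relevant shape factors through it, using injectivity of $P_{i1}$ for the ``out of'' direction and projectivity for the ``into'' direction; the other (Lemma~\ref{lemma_pd1}) then runs a careful double induction, choosing the summand of minimal length at each stage and using those factorizations to perform the elementary transformations. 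Your appeal to injectivity is pointing at the right tool (it is indeed how one factors through the chosen monic component), but projectivity is equally essential for the column operations and does not appear in your sketch, and the length-minimality bookkeeping that makes the induction close is absent. As written, the converse direction is a proposal for a proof rather than a proof.
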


This will be proved in Section $2$ by using key lemmas which follow from Definition \ref{def_Harada} directly.

The other main result is the classification of basic tilting $R$-modules.
We denote by $\tilt{R}$ the set of isomorphism classes of basic tilting $R$-modules and by $\utri{n}{K}$ the $n \times n$ upper triangular matrix algebra over $K$
\[
\left( \begin{array}{ccc}
K & \cdots & K \\
 & \ddots & \vdots \\
0 & & K
\end{array} \right).
\]
The following asserts that tilting $R$-modules are described by tilting modules over a direct product of algebras of the form $\utri{n}{K}$ which gives a typical example of 
a Harada algebra.

\begin{thm}\label{main_thm2}
There exists a bijection
\[
\tilt{R} \xrightarrow{\hspace{1cm}} \tilt{\utri{n_1}{K}} \times \tilt{\utri{n_2}{K}} \times \cdots \times \tilt{\utri{n_m}{K}}.
\]
\end{thm}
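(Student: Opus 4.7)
The plan is to realize the bijection by decomposing any basic tilting $R$-module along the row structure of the Harada algebra and matching each row with a tilting module over the corresponding $\utri{n_i}{K}$. By Theorem \ref{classify_pd1}, together with the indecomposable projectives themselves, every indecomposable $R$-module of projective dimension at most one has the form $P_{ij}$ or $P_{ik}/P_{il}$ and belongs to a unique row $i$. So any partial tilting module $T$ decomposes canonically as $T = \bigoplus_{i=1}^m T^{(i)}$, where $T^{(i)}$ collects all indecomposable summands lying in row $i$. Denoting by $Q_{ij}$ the indecomposable projectives of $\utri{n_i}{K}$ (which again form a chain $Q_{i1} \supset \cdots \supset Q_{in_i}$), the obvious analogue of Theorem \ref{classify_pd1} gives the same combinatorial list of indecomposable modules of projective dimension at most one over $\utri{n_i}{K}$, and I define the candidate map $\Phi$ by sending $P_{ij} \mapsto Q_{ij}$ and $P_{ik}/P_{il} \mapsto Q_{ik}/Q_{il}$ on indecomposables, then extending to direct sums row by row.

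The verification that $\Phi$ is a well-defined bijection between $\tilt{R}$ and $\prod_i \tilt{\utri{n_i}{K}}$ rests on two Ext comparisons. For indecomposables $M, N$ of projective dimension at most one in the same row $i$, I would compute $\ext{1}{R}{M}{N}$ using the projective resolution $0 \to P_{il} \to P_{ik} \to P_{ik}/P_{il} \to 0$ and show it is naturally isomorphic to the corresponding Ext over $\utri{n_i}{K}$, which should follow from the chain structure $P_{i1} \supset \cdots \supset P_{in_i}$ forcing Hom-spaces between row-$i$ projectives to mirror those in $\utri{n_i}{K}$. For $M, N$ in distinct rows $i \neq i'$, I would show that $\ext{1}{R}{M}{N} = 0$. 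Granted these, the condition $\ext{1}{R}{T}{T}=0$ decouples row by row, and the summand count matches since a basic tilting module on either side has $\sum_i n_i$ nonisomorphic indecomposable summands.

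For the tilting condition (3), observe that $R_R = \bigoplus_{i,j} P_{ij}$ decomposes into row-wise summands mirroring the regular $\utri{n_i}{K}$-modules. Given a tilting module $S^{(i)}$ over each $\utri{n_i}{K}$ with exact sequence $0 \to \utri{n_i}{K} \to S^{(i)}_0 \to S^{(i)}_1 \to 0$, these translate via $\Phi^{-1}$ into row-wise exact sequences of $R$-modules whose direct sum yields the sequence required for $T = \Phi^{-1}(\bigoplus_i S^{(i)})$. The main obstacle I expect is the cross-row Ext-vanishing: although Homs between projectives in different rows can be nonzero, the induced map $\homo{R}{P_{ik}}{N} \to \homo{R}{P_{il}}{N}$ must be shown to be surjective for every indecomposable $N$ of projective dimension at most one lying outside row $i$, and proving this cleanly from the Harada structure (notably the injectivity of $e_{i1}R$ and the chain condition on each row) is the technical heart of the argument.
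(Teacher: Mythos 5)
Your plan is essentially the paper's proof, executed in the same spirit: decompose modules by rows, compare Ext-spaces row by row, and count summands. The organizational idea you did not anticipate is that the paper first realizes $\prod_i \utri{n_i}{K}$ as an explicit factor algebra $\overline{R} = R/I$ of $R$ with $I = \bigoplus_{i,j}\kjac{n_i-j+1}{e_{ij}R}$ (Proposition \ref{factor_R/X}), and then takes your ad hoc assignment $\Phi$ to be the genuine tensor functor $F = -\otimes_R\overline{R}:\mathcal{P}\to\mathrm{mod}\overline{R}$ (Lemma \ref{fun_R/X}). This is not merely cosmetic: the natural surjections $P_{ij}\twoheadrightarrow\lfac{i}{j}$ give a commutative ladder between the $R$-projective resolution and the $\overline{R}$-projective resolution of $\fac{i}{k}{l}$, which together with Lemma \ref{lemma_ext} makes the in-row Ext comparison transparent. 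You also flag the cross-row Ext vanishing as the technical heart of the argument, but in the paper that is the easier case: Lemma \ref{ext_1} follows almost immediately from Lemma \ref{key_lemma2}(2) once one notes that socles of projectives in different rows are non-isomorphic, so no monomorphisms exist between them and the relevant Hom map is automatically surjective. The genuinely delicate point is the within-row comparison in Proposition \ref{prop_ext1}(3), where $\ext{1}{R}{\fac{i}{k}{l}}{P_{ij}}$ and $\ext{1}{\overline{R}}{\fac{i}{k}{l}}{\lfac{i}{j}}$ are not naturally isomorphic (since $P_{ij}$ is $R$-projective but $\lfac{i}{j}$ is not), and one must show by a case analysis on the indices $j,k,l$ that they vanish simultaneously. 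Finally, you need not verify the exact-sequence condition (3) of Definition \ref{def_tilt} by hand as you suggest at the end; both you and the paper invoke Proposition \ref{equ_tilt}, so a partial tilting module with the correct number of nonisomorphic summands is automatically tilting.
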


We will construct the above correspondence in Section $4$. 
By the well-known classification of tilting modules over upper triangular matrix algebras over $K$ which we recall in Section $5$, 
we can classify tilting modules over a given left Harada algebra by the above correspondence. 

Moreover, we can describe tilting $\utri{n}{K}$-modules combinatorially by using non-crossing partitions of a regular $(n+2)$-polygon.
In particular, we have the following application.

\begin{cor}
The number of basic tilting $R$-modules is equal to
\[
\prod_{i=1}^{m} \frac{1}{n_i+1}  {2n_i \choose n_i}.
\]
\end{cor}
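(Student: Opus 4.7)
The plan is to reduce the counting problem to a product of counts over upper triangular matrix algebras via Theorem \ref{main_thm2}, and then invoke the classical Catalan count for $\tilt{\utri{n}{K}}$.

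First I would apply Theorem \ref{main_thm2} directly: the asserted bijection gives
\[
|\tilt{R}| \;=\; \prod_{i=1}^{m} |\tilt{\utri{n_i}{K}}|.
\]
Thus the corollary reduces to showing that the number of basic tilting modules over $\utri{n}{K}$ equals the $n$-th Catalan number $\frac{1}{n+1}\binom{2n}{n}$.

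For the second step, I would appeal to the combinatorial classification recalled in Section $5$, namely that basic tilting $\utri{n}{K}$-modules are parametrized by the non-crossing partitions (equivalently, the triangulations) of a regular $(n+2)$-polygon. Since the number of such configurations is well known to be the $n$-th Catalan number, we obtain $|\tilt{\utri{n}{K}}| = \frac{1}{n+1}\binom{2n}{n}$. Multiplying these together over $i=1,\ldots,m$ yields the claimed formula.

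There is no real obstacle here: the corollary is a formal consequence of Theorem \ref{main_thm2} together with the combinatorial classification of tilting modules over $\utri{n}{K}$, both of which are established (or recalled) elsewhere in the paper. The only thing worth writing out carefully is the bookkeeping that the bijection of Theorem \ref{main_thm2} preserves the property of being basic, so that the product of the cardinalities on the right really matches the cardinality of $\tilt{R}$ on the left; this is immediate once one unpacks how the correspondence in Theorem \ref{main_thm2} is defined component by component.
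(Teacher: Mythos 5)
Your proof is correct and follows exactly the route the paper intends: Theorem \ref{main_thm2} reduces the count to the product $\prod_i |\tilt{\utri{n_i}{K}}|$, and Theorem \ref{tilt_utri} identifies $\tilt{\utri{n}{K}}$ with triangulations of the regular $(n+2)$-gon, which are counted by the Catalan number $\frac{1}{n+1}\binom{2n}{n}$. The worry about ``basicness'' is a non-issue since $\tilt{-}$ is defined from the outset as the set of isomorphism classes of \emph{basic} tilting modules, so both sides of the bijection already live in that setting.
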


Throughout this paper,  an algebra means a finite dimensional associative algebra over an algebraically closed field $K$.
We always deal with finitely generated right modules over algebras.
\bigskip

\noindent \textit{Acknowledgement}
The author is deeply grateful to Professor Kiyoichi Oshiro for giving me a chance to study ring theory and his warm encouragement. 
The author would like to thank Professor Osamu Iyama for generous support and suggestions getting to the points, and Martin Herschend and Michael Wemyss 
for helpful comments and suggestions.

\bigskip

\section{Proof of Theorem \ref{classify_pd1}}
In this section, we first give some key lemmas of this paper, that is, the properties of homomorphisms between indecomposable projective $R$-modules. 
Next we prove Theorem \ref{classify_pd1} by using these lemmas.

Let $R$ be a left Harada algebra as in Definition \ref{def_Harada}. 
We use the notation \eqref{projective}.

\begin{lemma}\label{basic_lemma}
If a submodule of $P_{i1}$ is not contained in $\jac{P_{in_i}}$, then it is $P_{ij}$ for some $1 \leq j \leq n_i$.
\end{lemma}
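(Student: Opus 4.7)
My plan is to pick out the correct $j$ from the descending chain and then apply Nakayama's lemma. The key observation is that $P_{ij} = \kjac{j-1}{P_{i1}}$ immediately gives $\jac{P_{ij}} = \kjac{j}{P_{i1}} = P_{i,j+1}$ for all $j < n_i$. So the chain $P_{i1} \supset P_{i2} \supset \cdots \supset P_{in_i}$ coincides with the top $n_i - 1$ radical layers of the injective module $P_{i1}$, and $\jac{P_{in_i}} = \kjac{n_i}{P_{i1}}$ is just the next layer below.

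Given a submodule $M \subset P_{i1}$ with $M \not\subset \jac{P_{in_i}}$, I would take $j$ to be the maximal index in $\{1, \ldots, n_i\}$ with $M \subset P_{ij}$. The value $j=1$ always works, and the hypothesis $M \not\subset \kjac{n_i}{P_{i1}}$ prevents $j$ from exceeding $n_i$, so such a maximum exists. If $j < n_i$, then by maximality $M \not\subset P_{i,j+1} = \jac{P_{ij}}$; if $j = n_i$, the same conclusion is just the hypothesis. Either way, $M \subset P_{ij}$ and $M \not\subset \jac{P_{ij}}$.

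Finally, since $P_{ij}$ is indecomposable projective and hence local, $\jac{P_{ij}}$ is its unique maximal submodule, so the non-containment $M \not\subset \jac{P_{ij}}$ forces $M + \jac{P_{ij}} = P_{ij}$. Nakayama's lemma applied to the finitely generated module $P_{ij}/M$ then yields $M = P_{ij}$. I do not foresee a substantial obstacle: the lemma is essentially bookkeeping on Definition \ref{def_Harada}, the only care required being the translation of $\jac{P_{in_i}}$ back into the radical filtration of $P_{i1}$ so that the correct index $j$ can be singled out.
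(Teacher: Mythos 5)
Your proof is correct and simply spells out the argument that the paper compresses into the single remark ``It follows from Definition~\ref{def_Harada}~(b)'': identifying $\jac{P_{ij}} = P_{i,j+1}$ via that definition, choosing the maximal $j$ with $M \subset P_{ij}$, and concluding $M = P_{ij}$ from locality of the indecomposable projective $P_{ij}$ (equivalently, Nakayama). No gap; this is the intended bookkeeping made explicit.
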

\begin{proof}
It follows from Definition \ref{def_Harada} (b).
\end{proof}

\begin{lemma}\label{key_lemma1}
Let $f:P_{ij} \longrightarrow P_{kl}$ be a homomorphism. Then the following hold.
\begin{enumerate}
\def\labelenumi{(\theenumi)}   
\item $f$ is monic if and only if $i=k$, $j \geq l$ and $\image{f}=P_{kj}$.
\item $f$ is not monic if and only if $\image{f} \subset \jac{P_{kn_k}}$.
\item Assume $i=k$ and $j<l$, 
we have $\image{f} \subset \jac{P_{kn_k}}$
\item Assume $i \neq k$, 
we have $\image{f} \subset \jac{P_{kn_k}}$.
\end{enumerate}
\end{lemma}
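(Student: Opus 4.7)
The cleanest route is to establish part $(2)$ first and then extract $(1)$, $(3)$, $(4)$ as easy consequences.

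For $(2)$ $(\Rightarrow)$, argue by contrapositive: assume $\image{f}\not\subset\jac{P_{kn_k}}$. Since $\image{f}\subset P_{kl}\subset P_{k1}$, Lemma \ref{basic_lemma} forces $\image{f}=P_{ka}$ for some $a$, and the induced surjection $P_{ij}\twoheadrightarrow P_{ka}$ splits because $P_{ka}$ is projective; indecomposability together with the basic-algebra hypothesis forces $(i,j)=(k,a)$, so this surjection is actually an isomorphism and $f$ is monic. For $(2)$ $(\Leftarrow)$, assume $\image{f}\subset\jac{P_{kn_k}}$ and, aiming for a contradiction, that $f$ is monic, so $\image{f}\cong P_{ij}$. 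Then $\image{f}$ is a nonzero submodule of the indecomposable injective $P_{k1}$, so $P_{k1}$ is its injective envelope; likewise, $P_{i1}$ is the injective envelope of $P_{ij}\cong\image{f}$. Uniqueness of injective envelopes yields $P_{i1}\cong P_{k1}$, hence $i=k$. A length comparison then delivers the contradiction: since $j\le n_k$, we have $P_{kj}\supseteq P_{kn_k}$, so
\[ \leng{P_{kj}}\ge \leng{P_{kn_k}} > \leng{P_{kn_k}}-1 = \leng{\jac{P_{kn_k}}}, \]
contradicting $\image{f}\cong P_{kj}$ lying inside $\jac{P_{kn_k}}$.

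With $(2)$ in hand, the rest is straightforward. For $(1)$ $(\Rightarrow)$, the contrapositive of $(2)$ gives $\image{f}\not\subset\jac{P_{kn_k}}$, and the splitting argument above then pins down $i=k$ and $\image{f}=P_{kj}$; since submodules of $P_{k1}$ are ordered along the chain $P_{k1}\supset\cdots\supset P_{kn_k}$, the inclusion $P_{kj}\subset P_{kl}$ forces $j\ge l$. For $(1)$ $(\Leftarrow)$, under the stated hypotheses $f$ factors as $P_{ij}\twoheadrightarrow P_{kj}\hookrightarrow P_{kl}$; the surjection is between copies of the same module, hence an isomorphism, so $f$ is monic. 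Parts $(3)$ and $(4)$ follow by contradiction using $(1)$ and $(2)$: in $(3)$, a monic $f$ would force $j\ge l$, contradicting $j<l$; in $(4)$, it would force $i=k$, contradicting $i\ne k$. In each case $f$ is not monic, so $(2)$ yields the stated containment.

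The main obstacle is the $(\Leftarrow)$ direction of $(2)$, which requires combining two features of the Harada setting: the injectivity of $P_{k1}$ (to collapse the indices via injective envelopes) and the length gap between $P_{kn_k}$ and $\jac{P_{kn_k}}$ (to preclude an unexpected deeper copy of $P_{kj}$). The other parts are routine consequences of Lemma \ref{basic_lemma}, projectivity, and the basic hypothesis on $R$.
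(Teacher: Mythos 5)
Your proof is correct and rests on the same tools as the paper's argument: Lemma \ref{basic_lemma}, the splitting argument from projectivity of the image, length comparisons along the chain $P_{i1}\supset\cdots\supset P_{in_i}$, and an identification of $i$ with $k$ via the unique indecomposable injective containing the module (you phrase this with injective envelopes; the paper phrases it dually via the simple socles of $P_{ij}$ and $P_{kl}$). The only real difference is organizational: you prove $(2)$ in full first and deduce $(1)$ from it, whereas the paper proves the forward direction of $(1)$ first and then obtains the converse of $(2)$ by citing $(1)$; both routes go through the same computations.
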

\begin{proof}
(1) We assume that $f$ is monic.  Then $i=k$ since $\socle{P_{ij}} \simeq \socle{P_{kl}}$ and these are simple. 
By $\leng{P_{ij}} \leq \leng{P_{kl}}$, we have $j \geq l$. 
By Lemma \ref{basic_lemma}, the only submodule of $P_{kl}$ whose length is equal to $\leng{P_{ij}}$ is $P_{kj}$.
The converse follows from $\leng{P_{ij}}=\image{f}$.

(2) We assume that $\image{f} \nsubseteq \jac{P_{kn_k}}$. 
By Lemma \ref{basic_lemma}, there exists $0 \leq r \leq n_k-l$ such that $\image{f}=P_{k,r+l}$. 
Therefore $f$ is monic since $f$ can be seen as an epimorphism between indecomposable projective $R$-modules.
The converse follows from (1).

(3) Since $\leng{P_{ij}} > \leng{P_{il}}$, there exists no monomorphism from $P_{ij}$ to $P_{il}$. 
Therefore the assertion follows from (2). 

(4) Since $i \neq k$, $\socle{P_{ij}}$ and $\socle{P_{kl}}$ are not isomorphic. 
Hence there exists no monomorphism from $P_{ij}$ to $P_{kl}$. 
Therefore the assertion follows from (2).
\end{proof}

\begin{lemma}\label{key_lemma2}
Let $f:P_{ij} \longrightarrow P_{il}$ be any monomorphism with $j \geq l$.
Then the following hold.
\begin{enumerate}
\def\labelenumi{(\theenumi)}  
\item For any homomorphism $g:P_{ij} \longrightarrow P_{il'}$ with $l \geq l'$, 
there exists a homomorphism $h:P_{il} \longrightarrow P_{il'}$ such that $g=hf$.
\item For any homomorphism $g:P_{ij} \longrightarrow P_{st}$ which is not monic, 
there exists a homomorphism $h:P_{il} \longrightarrow P_{st}$ such that $g=hf$.
\item For any homomorphism $g:P_{il'} \longrightarrow P_{il}$ with $l' \geq j$, 
there exists a homomorphism $h:P_{il'} \longrightarrow P_{ij}$ such that $g=fh$.
\item For any homomorphism $g:P_{st} \longrightarrow P_{il}$ which is not monic, 
there exists a homomorphism $h:P_{st} \longrightarrow P_{ij}$ such that $g=fh$.
\end{enumerate}
\end{lemma}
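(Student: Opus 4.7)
The plan centers on the observation that, by Lemma \ref{key_lemma1}(1), the hypothesis that $f$ is monic with $j \geq l$ forces $\image{f} = P_{ij}$ inside $P_{il}$, so $f$ is an isomorphism from $P_{ij}$ onto this canonical submodule. I will treat the ``lifting'' parts (3)--(4) and the ``extension'' parts (1)--(2) separately.

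For (3) and (4), the strategy is to verify that $\image{g} \subseteq \image{f} = P_{ij}$ inside $P_{il}$. Once this is established, $h$ can be defined as $g$ followed by the inverse of the isomorphism $P_{ij} \to \image{f}$ induced by $f$, making $fh = g$ automatic. In (3) with $g$ monic, Lemma \ref{key_lemma1}(1) gives $\image{g} = P_{il'} \subseteq P_{ij}$ since $l' \geq j$; in (3) with $g$ non-monic, or in (4), Lemma \ref{key_lemma1}(2) yields $\image{g} \subseteq \jac{P_{in_i}} \subseteq P_{in_i} \subseteq P_{ij}$.

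For (1) and (2), I would exploit the injectivity of $P_{i1}$ (respectively $P_{s1}$), supplied by Definition \ref{def_Harada}(1). Composing $g$ with the inclusion of $P_{il'}$ (respectively $P_{st}$) into this injective module, the monomorphism $f$ permits an extension to some $\tilde h \colon P_{il} \to P_{i1}$ (respectively $\tilde h \colon P_{il} \to P_{s1}$). The task reduces to showing that $\image{\tilde h}$ sits inside $P_{il'}$ (respectively $P_{st}$), so that $\tilde h$ factors through the inclusion and yields the desired $h$.

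Lemma \ref{basic_lemma} supplies a clean dichotomy: either $\image{\tilde h} \subseteq \jac{P_{in_i}}$ (or $\jac{P_{sn_s}}$), which is automatically contained in any module of the same chain, or $\image{\tilde h}$ equals some $P_{is}$ (or $P_{sr}$), in which case Lemma \ref{key_lemma1}(2) forces $\tilde h$ to be monic. For (1), Lemma \ref{key_lemma1}(1) then pins $\image{\tilde h} = P_{il} \subseteq P_{il'}$ using $l \geq l'$; for (2), monicity of $\tilde h$ would make $\tilde h f$ monic and hence force $g$ to be monic, contradicting the hypothesis. The main obstacle I anticipate is excluding this monic-extension scenario in (2); everything else amounts to routine bookkeeping of the canonical inclusions produced by Lemma \ref{basic_lemma}.
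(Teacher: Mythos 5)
Your proof is correct and follows essentially the same route as the paper: parts (1)--(2) extend along the monomorphism $f$ using injectivity of $P_{i1}$ (resp.\ $P_{s1}$) and then argue that the image of the extension lands in the smaller module, while parts (3)--(4) reduce to the containment $\image{g} \subseteq \image{f} = P_{ij}$. The only cosmetic difference is that in (3)--(4) the paper lifts $g$ through $f$ via projectivity of the source, whereas you invert the isomorphism $P_{ij}\xrightarrow{\sim}\image{f}$; these produce the same $h$, and your explicit use of the Lemma~\ref{basic_lemma} dichotomy in part (1) spells out a step the paper leaves implicit.
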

\begin{proof}
(1) Let $u:P_{il'} \longrightarrow P_{i1}$ be the inclusion map. 
Since $P_{i1}$ is injective, there exists a homomorphism $h:P_{il} \longrightarrow P_{i1}$ such that $ug=hf$.
\[
\xymatrix{
0 \ar[r] & P_{ij} \ar[r]^f \ar[d]_g & P_{il} \ar@{.>}[ldd]^h \\
 & P_{il'} \ar[d]_u &  \\
 & P_{i1} &
}
\]
Since $l \geq l'$, we have $\image{h} \subset P_{il'}$. 
We can see $h$ as $h:P_{il} \longrightarrow P_{il'}$.

(2) Let $u:P_{st} \longrightarrow P_{s1}$ be the inclusion map. 
Since $P_{s1}$ is injective, there exists a homomorphism $h:P_{il} \longrightarrow P_{s1}$ such that $ug=hf$.
\[
\xymatrix{
0 \ar[r] & P_{ij} \ar[r]^f \ar[d]_g & P_{il} \ar@{.>}[ldd]^h \\
 & P_{st} \ar[d]_u &  \\
 & P_{s1} &
}
\]
If $h$ is monic, then $ug=hf$ is monic, hence $g$ is monic. This is contradiction. 
Therefore $h$ is not monic. By Lemma \ref{key_lemma1} (2), we have $\image{h} \subset \jac{P_{sn_s}} \subset P_{st}$. 
We can see $h$ as $h:P_{il} \longrightarrow P_{st}$.

(3) By Lemma \ref{key_lemma1} (1),(3), we have $\image{f}=P_{ij}$ and $\image{g} \subset P_{il'}$. 
Since $l' \geq j$, we have $\image{g} \subset \image{f}$. 
Since $P_{il'}$ is projective, there exists a homomorphism $h:P_{il'} \longrightarrow P_{ij}$ such that $g=fh$.
\[
\xymatrix{
& P_{il'} \ar[dd]^g \ar@{.>}[ldd]_h & \\
& & \\
P_{ij} \ar[r]_f & P_{ij} \ar[r] & 0
}
\]

(4) By Lemma \ref{key_lemma1} (1),(2), we have $\image{g} \subset \jac{P_{in_i}} \subset P_{ij}=\image{f}$. 
The assertion follows by the same argument as in the proof of (3). 
\end{proof}

The following result gives Theorem \ref{classify_pd1}. 

\begin{lemma}\label{lemma_pd1} 
Let $Q_i$ and $Q'_j$ be indecomposable projective $R$-modules and 
\[
f: Q:=Q_1 \oplus \cdots \oplus Q_k \longrightarrow Q':=Q'_1 \oplus \cdots \oplus Q'_l
\]
a monomorphism.
Then there exists automorphisms $\varphi \in \aut{R}{Q}$, $\psi \in \aut{R}{Q'}$ such that
\[
\psi f \varphi^{-1}=\left( \begin{array}{ccc}
f_1 & & 0 \\
& \ddots & \\
0 & & f_k \\ \hline
 & 0 & \\
 & \vdots & \\
 & 0 & 
\end{array} \right) : Q_1 \oplus \cdots \oplus Q_k \xrightarrow{\hspace{1cm}} Q'_1 \oplus \cdots \oplus Q'_l .
\]
\end{lemma}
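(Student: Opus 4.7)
The plan is to proceed by induction on $k$. For the base case $k=1$, I would write $f = (f_1, \ldots, f_l)^T$ with $f_i : Q_1 \to Q'_i$ and $Q_1 = P_{a,b}$. Since $Q_1$ sits inside the indecomposable injective $P_{a,1}$, its socle $\socle{Q_1}$ is simple, and the monic hypothesis on $f$ forces $f(\socle{Q_1})$ to be a nonzero simple submodule of $\socle{Q'} = \bigoplus_i \socle{Q'_i}$; semisimplicity then forces some component $f_i \vert_{\socle{Q_1}}$ to be nonzero, and because every nonzero submodule of $Q_1$ contains $\socle{Q_1}$, this $f_i$ is itself a monomorphism. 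Let $M = \{i : f_i \text{ is monic}\}$; by Lemma \ref{key_lemma1}(1) each $Q'_i$ with $i \in M$ has the form $P_{a, b'_i}$. I would choose $i_0 \in M$ maximizing $b'_{i_0}$, so that $Q'_{i_0}$ is a submodule of every $Q'_i$ with $i \in M$. Lemma \ref{key_lemma2}(1) then produces $h_i : Q'_{i_0} \to Q'_i$ with $f_i = h_i f_{i_0}$ for $i \in M$, and Lemma \ref{key_lemma2}(2) does the same for $i \notin M$. After permuting $i_0$ to position~$1$ and applying the lower-triangular automorphism of $Q'$ built from the $h_i$'s with appropriate signs, $f$ is transformed into $(f_{i_0}, 0, \ldots, 0)^T$.

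For the inductive step, assume $k \geq 2$ and the statement for $k-1$. I would first permute the summands of $Q$ so that $Q_1$ is \emph{maximal} in the following sense: every $Q_j$ sharing its first Harada index with $Q_1$ is a submodule of $Q_1$. Such a $Q_1$ exists because, for a fixed Harada index $s$, the summands of first index $s$ form a totally ordered chain inside $P_{s,1}$, so one takes $Q_1$ largest in any single Harada block appearing in $Q$. Applying the base case to $f \vert_{Q_1}$ yields, after a suitable permutation of the summands of $Q'$, an automorphism $\psi$ of $Q'$ such that $\psi f$ has first column $(f_{11}, 0, \ldots, 0)^T$ with $f_{11} : Q_1 \to Q'_1$ monic. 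Next I would clear the rest of the first row by constructing $\alpha_j : Q_j \to Q_1$ with $f_{1j} = f_{11} \alpha_j$ for $j \geq 2$: if $f_{1j}$ is not monic, Lemma \ref{key_lemma2}(4) gives $\alpha_j$; if $f_{1j}$ is monic, Lemma \ref{key_lemma1}(1) identifies the first index of $Q_j$ with that of $Q'_1$ and hence with that of $Q_1$, whence maximality of $Q_1$ yields $Q_j \subseteq Q_1$ and Lemma \ref{key_lemma2}(3) applies. The resulting unipotent automorphism $\varphi$ of $Q$ transforms $\psi f$ into the block-diagonal matrix $\mathrm{diag}(f_{11}, f')$ for some $f' : Q_2 \oplus \cdots \oplus Q_k \to Q'_2 \oplus \cdots \oplus Q'_l$. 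This $f'$ is monic because any element of its kernel extends by $0$ to an element of the kernel of $\psi f \varphi^{-1}$, which is trivial, and the induction hypothesis applied to $f'$ then supplies the block-diagonal form demanded by the lemma.

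The main obstacle is an order-theoretic one: Lemma \ref{key_lemma2}(1) and (3) only produce factorisations in the ``downward'' direction along each chain $P_{s,1} \supset P_{s,2} \supset \cdots$. Consequently, the base case forces one to pick the \emph{smallest} monic target $Q'_{i_0}$, and the inductive step forces one to pick the \emph{largest} source $Q_1$ within its Harada block, as preparatory reorderings. Once these choices are in place, the remainder of the argument is routine block-elimination via unipotent automorphisms.
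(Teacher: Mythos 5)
Your proof is correct and follows essentially the same strategy as the paper: induction on $k$, the factorisation properties of Lemma \ref{key_lemma2} applied to an extremal source summand, and unipotent block automorphisms to clear off-diagonal entries. The only difference is organisational: the paper's inductive step applies the induction hypothesis to $Q_1 \oplus \cdots \oplus Q_{k-1}$ first and then handles the remaining column over $Q_k$ (chosen of smallest length) by a base-case-style argument, whereas you peel off a single maximal source $Q_1$ first via the base case, clear the first row, and then apply induction to $f'$ — mirror-image reductions built from the same lemmas and the same extremality idea.
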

\begin{proof}
We proceed by induction on $k$. 
First we consider the case $k=1$. Then $Q$ is an indecomposable projective $R$-module. 
We write $f$ as 
\[
f:Q \xrightarrow{
\left( \begin{array}{c}
f_1 \\
\vdots \\
f_l
\end{array} \right)
} Q'
, \quad f_i:Q \longrightarrow Q_i \quad (1 \leq i \leq l).
\]
Since $\socle{Q}$ is simple, there exists an monomorphism in $\{f_1,\cdots,f_l\}$. 
So we can assume that $f_1,\cdots,f_r$ are monic and $f_{r+1},\cdots,f_l$ are not monic. 
We assume that $\leng{Q'_1} \leq \leng{Q'_i}$ for $2 \leq i \leq r$. 
Then for any $2 \leq j \leq r$ there exists a homomorphism $h_j:Q'_1 \longrightarrow Q'_j$ such that 
$f_j=h_jf_1$ by Lemma \ref{key_lemma2} (1). 
Moreover, for any $r+1 \leq j \leq n$ there exists a homomorphism $h_j:Q'_1 \longrightarrow Q'_j$ such that 
$f_j=h_jf_1$ by Lemma \ref{key_lemma2} (2). 
Let
\[
\psi=\left( \begin{array}{c|ccc}
1 & 0 & \cdots & 0 \\ \hline
-h_2 & 1 &  & 0 \\
\vdots & & \ddots & \\
-h_l & 0 & & 1
\end{array} \right) \in \aut{R}{Q'_1 \oplus \cdots \oplus Q'_l} .
\]
Then we have
\[
\psi f =
\left( \begin{array}{c|ccc}
1 & 0 & \cdots & 0 \\ \hline
-h_2 & 1 &  & 0 \\
\vdots & & \ddots & \\
-h_l & 0 & & 1
\end{array} \right)
\left( \begin{array}{c}
f_1 \\
\vdots \\
f_l
\end{array} \right)
=\left( \begin{array}{c}
f_1 \\
0 \\
\vdots \\
0
\end{array} \right).
\]

Next we assume that $k \geq 2$ and that the assertion holds for $k-1$. 
We assume that $\leng{Q_k} \leq \leng{Q_i}$ for $1 \leq i \leq k-1$. 
By applying the induction hypotheses to $f|_{Q_1\oplus \cdots \oplus Q_{k-1}}$, we can assume that
\[
f|_{Q_1\oplus \cdots \oplus Q_{k-1}}: Q_1\oplus \cdots \oplus Q_{k-1} \xrightarrow{
\left( \begin{array}{ccc}
f_1 & & 0 \\
& \ddots & \\
0 & & f_{k-1} \\ \hline
& 0 & \\
& \vdots & \\
& 0 &
\end{array} \right)
} Q' , \quad
f_i : Q_i \longrightarrow Q'_i \quad (1 \leq i \leq k-1).
\]
Therefore we can write $f$ as
\[
f:Q \xrightarrow{
\left( \begin{array}{ccc|c}
f_1 & & 0 & g_1 \\
& \ddots & & \vdots \\
0 & & f_{k-1} & g_{k-1} \\ \hline
0 & \cdots & 0 &  g_k \\
\vdots & & \vdots &  \vdots \\
0 & \cdots & 0 &  g_l \\
\end{array} \right)
} Q' , \quad 
g_i:Q_k \longrightarrow Q'_i \quad (1 \leq i \leq l).
\]
By Lemma \ref{key_lemma2} (3),(4), and the assumption on $Q_k$, for any $1 \leq i \leq k-1$ 
there exists a homomorphism $h_i:Q_k \longrightarrow Q_i$ such that $g_i=f_ih_i$. 
Let 
\[
\varphi=\left( \begin{array}{ccc|c}
1 & & 0 & h_1 \\
& \ddots &  & \vdots \\
0 &  & 1 & h_{k-1} \\ \hline
0 & & & 1
\end{array} \right) \in \aut{}{Q_1 \oplus \cdots \oplus Q_k}.
\]
Then we have
\[
\left( \begin{array}{ccc|c}
f_1 & & 0 &  \\
 & \ddots & & 0 \\
0 & & f_{k-1} & \\ \hline
0 & \cdots & 0 &  g_k \\
\vdots & & \vdots &  \vdots \\
0 & \cdots & 0 &  g_l \\
\end{array} \right) \varphi
= \left( \begin{array}{ccc|c}
f_1 & & 0 & g_1 \\
& \ddots & & \vdots \\
0 & & f_{k-1} & g_{k-1} \\ \hline
0 & \cdots & 0 &  g_k \\
\vdots & & \vdots &  \vdots \\
0 & \cdots & 0 &  g_l \\
\end{array} \right)
= f .
\]
By applying the same argument as in the case $k=1$ to
\[
\left( \begin{array}{c}
g_k \\
\vdots \\
g_l
\end{array} \right),
\]
the assertion follows.
\end{proof}

Now we can prove Theorem \ref{classify_pd1}.

\begin{proof}
The projective dimension of $P_{ik}/P_{il}$ is obviously equal to $1$.  
Let $X$ be an indecomposable $R$-module whose projective dimension is equal to one. 
Then there exists an exact sequence
\[
0 \longrightarrow Q \longrightarrow Q' \longrightarrow X \longrightarrow 0
\]
such that $Q$ and $Q'$ are projective $R$-modules. 
By Lemma \ref{lemma_pd1} and since $X$ is an indecomposable $R$-module, $Q$ and $Q'$ must be indecomposable $R$-modules. 
By Lemma \ref{key_lemma1} (1), $X$ is isomorphic to one of $P_{ik}/P_{il}$. 
\end{proof}

\bigskip

\section{Triangular factor algebras of Harada algebras}
In this section, we keep the notations from the previous section. 
We define a special factor algebra $\overline{R}=R/I$ of $R$ which is isomorphic to a direct product $\utri{n_1}{K} \times \cdots \times \utri{n_m}{K}$ 
of upper triangular matrix algebras over $K$. 
The algebra $\overline{R}$ contains important information about $R$ which is seen in Lemma \ref{fun_R/X} and Proposition \ref{prop_ext}. 

We start by giving the following ideal $I$ of $R$.  We put
\[
e_{ij}R \supset I_{ij}:=\kjac{n_i-j+1}{e_{ij}R} \hspace{1cm} (1 \leq i \leq m, \ 1 \leq j \leq n_i),
\]
and
\[
R \supset I:= \bigoplus_{i=1}^{m} \bigoplus_{j=1}^{n_i} I_{ij}.
\]
Then we have the following result.

\begin{lemma}\label{I_ideal} 
$I$ is an ideal of $R$.
\end{lemma}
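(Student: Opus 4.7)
The plan is as follows. Since each $I_{ij}$ is by definition a right submodule of $e_{ij}R$, the direct sum $I = \bigoplus_{i,j} I_{ij}$ is automatically a right ideal of $R$, so the content of the lemma is that $I$ is closed under left multiplication. By the Peirce decomposition of $R$, this reduces to showing that for each pair of primitive idempotents $e_{ij}, e_{st}$ and each $r \in e_{st}Re_{ij}$, left multiplication by $r$ carries $I_{ij}$ into $I_{st}$. Via the standard isomorphism $\homo{R}{e_{ij}R}{e_{st}R} \simeq e_{st}Re_{ij}$, the statement to verify becomes: for every $R$-linear map $f : e_{ij}R \to e_{st}R$, one has $f(I_{ij}) \subset I_{st}$.

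Next I would identify $e_{ij}R$ with $P_{ij}$ and make the key observation that $I_{st}$, viewed as a submodule of $P_{st}$, is nothing other than $\jac{P_{sn_s}}$. Indeed, using $P_{st} = \kjac{t-1}{P_{s1}}$ together with the identity $\jac{\kjac{k}{M}} = \kjac{k+1}{M}$, one gets
\[
I_{st} = \kjac{n_s-t+1}{P_{st}} = \kjac{n_s}{P_{s1}} = \jac{P_{sn_s}},
\]
and the same rewriting identifies $I_{ij}$ intrinsically as $\jac{P_{in_i}}$, independent of $j$.

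With this preparation, the two parts of Lemma \ref{key_lemma1} dispose of the claim directly. If $f$ is not monic, part (2) gives $\image f \subset \jac{P_{sn_s}} = I_{st}$, so in particular $f(I_{ij}) \subset I_{st}$. If $f$ is monic, part (1) gives $i=s$, $j \geq t$, and that $f$ realises an isomorphism from $P_{ij}$ onto the submodule $P_{sj} \subset P_{st}$; applying the same Jacobson calculation to $P_{sj} = \kjac{j-t}{P_{st}}$ then yields $f(I_{ij}) = \kjac{n_i-j+1}{P_{sj}} = \kjac{n_s-t+1}{P_{st}} = I_{st}$.

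I do not foresee a serious obstacle. The only point requiring care is the bookkeeping of successive Jacobson radicals $\kjac{k}{-}$ under the submodule inclusions supplied by Lemma \ref{key_lemma1}; once one records that every $I_{ij}$ is intrinsically $\jac{P_{in_i}}$ and applies the monic versus non-monic dichotomy, the verification is essentially automatic.
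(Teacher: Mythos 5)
Your proof is correct, and it takes a slightly different (and arguably more explicit) route than the paper's. The paper restricts the left-multiplication map to $I_{ij}$, obtaining $\varphi_r : I_{ij} \to e_{kl}R$, and then argues abstractly: since $I_{ij}$ is indecomposable and non-projective, its image cannot equal one of the projective submodules $P_{kt}$ of $e_{kl}R$ (otherwise $\varphi_r$ would split off a projective summand), so by Lemma \ref{basic_lemma} the image must sit inside $\jac{P_{kn_k}}=I_{kl}$. You instead consider the full map $f: e_{ij}R \to e_{st}R$ supplied by the Peirce decomposition, first record the intrinsic identification $I_{ij} = \jac{P_{in_i}}$ (independent of $j$), and then invoke the monic/non-monic dichotomy of Lemma \ref{key_lemma1} on $f$: the non-monic case lands the whole image in $\jac{P_{sn_s}}=I_{st}$, while the monic case forces $i=s$ and gives $f(P_{ij})=P_{sj}$, whence $f(I_{ij})=\kjac{n_s}{P_{s1}}=I_{st}$ by the same radical bookkeeping. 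A small advantage of your version is that you sidestep the paper's (unproved, though true) assertion that $I_{ij}$ is non-projective; both proofs ultimately rest on the same structural facts about submodules of $P_{i1}$ established in Lemmas \ref{basic_lemma} and \ref{key_lemma1}.
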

\begin{proof}
Clearly $I$ is a right ideal of $R$. We show $I$ is a left ideal of $R$. 
It is enough to show that $rx \in I_{kl}=\kjac{n_k-l+1}{e_{kl}R}$ for any $x \in I_{ij}=\kjac{n_i-j+1}{e_{ij}R}$ and any $r \in e_{kl}R$. 
We consider the homomorphism
\[
\varphi_r: I_{ij} \ni x \longmapsto rx \in e_{kl}R
\]
of right $R$-modules. Since $I_{ij}$ is indecomposable and non-projective, we have $\image{\varphi_r} \subset \kjac{n_k-l+1}{e_{kl}R}=I_{kl}$. 
Therefore $I$ is a left ideal of $R$.
\end{proof}

By Lemma \ref{I_ideal}, we can consider the factor algebra 
\[
\overline{R}:=R/I.
\]
We put 
\[
e_i:=e_{i1}+e_{i2}+\cdots+e_{in_i}
\]
for $1 \leq i \leq m$. 

Now we show the following description of $\overline{R}$.

\begin{prop}\label{factor_R/X}
Under the hypotheses above, the following assertions hold.
\begin{enumerate}
\def\labelenumi{(\theenumi)}    
\item  $\{ e_i \ | \ 1 \leq i \leq m \}$ is a set of orthogonal central idempotents of $\overline{R}$ and there exists a $K$-algebra isomorphism 
\[
\overline{R}e_j \simeq \utri{n_i}{K}.
\]
\item  There exists a $K$-algebra isomorphism
\[
\overline{R} \simeq \utri{n_1}{K} \times \utri{n_2}{K} \times \cdots \times \utri{n_m}{K}.
\]
\end{enumerate}
\end{prop}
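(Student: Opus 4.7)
The plan is to establish (1) first by hand, then deduce (2) as a corollary via the decomposition into central orthogonal idempotents. The key computational observation that drives everything is the identification
\[
I_{ij} \;=\; \kjac{n_i-j+1}{P_{ij}} \;=\; \jac{P_{in_i}},
\]
which holds for every $j$: iterating the relation $\jac{P_{ij}} = P_{i,j+1}$ coming from Definition \ref{def_Harada} (b) gives $\kjac{n_i-j}{P_{ij}} = P_{in_i}$, and one more radical yields the claim. Thus $I$ is the direct sum, over all $(i,j)$, of the submodule $\jac{P_{in_i}}$ sitting inside the $(i,j)$-summand of $R$.

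For (1), orthogonality of the $\overline{e_i}$'s in $\overline{R}$ is immediate from orthogonality of the $e_{ij}$'s, and $\sum_i\overline{e_i}=1$ since $\{e_{ij}\}$ is a complete set. To prove centrality, I would decompose any $r\in R$ as $r=\sum r_{ab,cd}$ with $r_{ab,cd}\in e_{ab}Re_{cd}$ and compute
\[
\overline{e_i}\,\overline{r} - \overline{r}\,\overline{e_i} \;=\; \sum_{b,(c,d):\,c\neq i} \overline{r_{ib,cd}} \;-\; \sum_{(a,b):\,a\neq i,d} \overline{r_{ab,id}}.
\]
Each surviving component corresponds (by $e_{ab}Re_{cd}\simeq\homo{R}{P_{cd}}{P_{ab}}$) to a homomorphism between indecomposable projectives from \emph{different} families, so by Lemma \ref{key_lemma1} (4) its image is contained in $\jac{P_{kn_k}}$, where $k$ is the family of the target. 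By the key observation, this is exactly the $I_{k\ell}$ summand of $I$, so the difference lies in $I$ and $\overline{e_i}$ is central.

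To identify $\overline{R}\,\overline{e_i}$ with $\utri{n_i}{K}$, I would first note that the radical filtration of $\overline{P}_{ij}:=e_{ij}R/I_{ij}$ is
\[
\overline{P}_{ij} \supset \overline{P}_{i,j+1} \supset \cdots \supset \overline{P}_{in_i} \supset 0,
\]
with each subquotient the simple top $\overline{S}_{ij},\overline{S}_{i,j+1},\ldots,\overline{S}_{in_i}$ respectively; this uses $I_{ij}=I_{i,j+1}=\jac{P_{in_i}}$. Hence $\dim_K\overline{e_{ij}}\overline{R}\,\overline{e_{ik}}$ equals the multiplicity of $\overline{S}_{ij}$ in $\overline{P}_{ik}$, which is $1$ if $j\geq k$ and $0$ otherwise. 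Summing gives $\dim_K\overline{R}\,\overline{e_i}=n_i(n_i+1)/2=\dim_K\utri{n_i}{K}$. The Gabriel quiver of $\overline{R}\,\overline{e_i}$ is read off from $\jac{\overline{P}_{ij}}/\jac^2{\overline{P}_{ij}}=\overline{S}_{i,j+1}$, giving the linear $A_{n_i}$ quiver $1\to 2\to\cdots\to n_i$, whose path algebra already has dimension $n_i(n_i+1)/2$. Thus there can be no relations, and the resulting path algebra is isomorphic to $\utri{n_i}{K}$. For (2), since the $\overline{e_i}$ are central orthogonal idempotents summing to $1$, $\overline{R}=\prod_{i=1}^m \overline{R}\,\overline{e_i}\simeq\prod_{i=1}^m\utri{n_i}{K}$.

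The main obstacle is the centrality step, and more specifically being confident that the Lemma \ref{key_lemma1} (4) image bound $\jac{P_{kn_k}}$ is precisely the summand $I_{k\ell}$ of $I$ for any target index $\ell$; the rest of the argument is essentially dimension counting and reading off the quiver, once $I_{ij}=\jac{P_{in_i}}$ is in place.
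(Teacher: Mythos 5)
Your proof is correct and takes a genuinely different route from the paper. The paper first computes $\homo{\overline{R}}{\lfac{i}{j}}{\lfac{k}{l}}$ for all indices (zero for $i\neq k$ because the projectives have no common composition factors; one-dimensional or zero for $i=k$ depending on $j\geq l$), and then reads off the Peirce decomposition $e_i\overline{R}e_j \simeq \homo{\overline{R}}{e_j\overline{R}}{e_i\overline{R}}$ as a matrix of Hom spaces. That $e_i\overline{R}e_j=0$ for $i\neq j$ simultaneously gives centrality and the block-diagonal structure, and the upper-triangular Hom pattern for $i=j$ hands you $\utri{n_i}{K}$ with no further work. You instead prove centrality directly by a commutator computation $\overline{e_i}\,\overline{r}-\overline{r}\,\overline{e_i}$, using Lemma \ref{key_lemma1} (4) to push each cross-family component into $I$; your step identifying the Lemma \ref{key_lemma1} (4) bound $\jac{P_{kn_k}}$ with $I_{k\ell}$ is exactly right (for any $\ell$, $\kjac{n_k-\ell+1}{P_{k\ell}}=\kjac{n_k}{e_{k1}R}=\jac{P_{kn_k}}$, which is your own opening observation). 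You then identify each block via its Gabriel quiver plus a dimension count. Both arguments are valid; the paper's is a more self-contained matrix computation, yours trades that for the standard quiver machinery and a cleaner conceptual reason for centrality. One small slip: the Peirce isomorphism gives $\overline{e_{ij}}\overline{R}\,\overline{e_{ik}}\simeq\homo{\overline{R}}{\overline{P}_{ik}}{\overline{P}_{ij}}$, whose dimension is the multiplicity of $\overline{S}_{ik}$ (the top of the source) in $\overline{P}_{ij}$, i.e.\ $1$ iff $k\geq j$; you wrote the multiplicity of $\overline{S}_{ij}$ in $\overline{P}_{ik}$, which swaps $j$ and $k$. This does not affect your argument, since the total $\sum_{j,k}$ is symmetric and still yields $\dim_K\overline{R}\,\overline{e_i}=n_i(n_i+1)/2$, but the identification as stated is reversed.
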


To prove the above proposition, we describe all indecomposable projective $\overline{R}$-modules as factor modules of indecomposable projective $R$-modules.
Since $I \subset \jac{R}$, we have that
\[
\{e_{ij}+I \in \overline{R}\ | \ 1 \leq i \leq m, \ 1 \leq j \leq n_i \}
\]
is a complete set of orthogonal primitive idempotents of $\overline{R}$. By the $\overline{R}$-module isomorphism 
\[
e_{ij}\overline{R} = e_{ij}R/\kjac{n_i-j+1}{e_{ij}R} \simeq P_{ij}/\kjac{n_i-j+1}{P_{ij}}=P_{ij}/\jac{P_{in_i}},
\]
 a complete set of indecomposable projective $\overline{R}$-modules is
\[
\{ P_{ij}/\jac{P_{in_i}} \ | \ 1 \leq i \leq m, \ 1 \leq j \leq n_i \}.
\]
By Definition \ref{def_Harada} (b),
\begin{eqnarray}
0 \subset P_{in_i}/\jac{P_{in_i}} \subset P_{i,n_i-1}/\jac{P_{in_i}} \subset \cdots \subset P_{i,j+1}/\jac{P_{in_i}} \subset P_{ij}/\jac{P_{in_i}}  \label{cs}
\end{eqnarray}
is a unique composition series of $P_{ij}/\jac{P_{in_i}}$ as an $\overline{R}$-module. 
Therefore any indecomposable projective $\overline{R}$-module is serial and its composition factors are not isomorphic to each other.

\bigskip

From the above argument, we can prove Proposition \ref{factor_R/X}.

\begin{proof}
(1) We calculate $\homo{\overline{R}}{\lfac{i}{j}}{\lfac{k}{l}}$. 
If $i \neq k$, $\lfac{i}{j}$ and $\lfac{k}{l}$ have no common composition factors. So we have
\[
\homo{\overline{R}}{\lfac{i}{j}}{\lfac{k}{l}}=0.
\]
If $i = k$, we can easily see that
\[
\homo{\overline{R}}{\lfac{i}{j}}{\lfac{i}{l}} \simeq \begin{cases}
K & (j \geq l) \\
0 & (j < l) 
\end{cases}
\]
by composition series \eqref{cs}.

Thus we have the follwoing isomorphisms as $K$-vector space.
\begin{eqnarray*}
e_i\overline{R}e_j \simeq \homo{\overline{R}}{e_j\overline{R}}{e_i\overline{R}} 
& \simeq & \left( \begin{array}{cccc}
\homo{\overline{R}}{\lfac{j}{1}}{\lfac{i}{1}} & \homo{\overline{R}}{\lfac{j}{2}}{\lfac{i}{1}} & \cdots & \homo{\overline{R}}{\lfac{j}{n_j}}{\lfac{i}{1}} \\
\homo{\overline{R}}{\lfac{j}{1}}{\lfac{i}{2}} & \homo{\overline{R}}{\lfac{j}{2}}{\lfac{i}{2}} & \cdots & \homo{\overline{R}}{\lfac{j}{n_j}}{\lfac{i}{2}} \\
\vdots & \vdots & \ddots & \vdots \\
\homo{\overline{R}}{\lfac{j}{1}}{\lfac{i}{n_i}} & \homo{\overline{R}}{\lfac{j}{2}}{\lfac{i}{n_i}} & \cdots & \homo{\overline{R}}{\lfac{j}{n_j}}{\lfac{i}{n_i}} \\
\end{array} \right) \\
& \simeq & \begin{cases} 
\left( \begin{array}{cccc}
K & K & \cdots & K \\
 & K & \cdots & K \\
 & & \ddots & \vdots \\
0 & & & K 
\end{array} \right) & (i = j) \\
0 & (i \neq j).
\end{cases}
\end{eqnarray*}
It is easily seen that the above isomorphism gives a $K$-algebra isomorphism when $i=j$.

(2) By (1), we have the following $K$-algebra isomorphism.
\[
\overline{R} \simeq \left( \begin{array}{cccc}
e_1\overline{R}e_1 & e_1\overline{R}e_2 & \cdots & e_1\overline{R}e_m \\
e_2\overline{R}e_1 & e_2\overline{R}e_2 & \cdots & e_2\overline{R}e_m \\
\vdots & \vdots & \ddots & \vdots \\
e_m\overline{R}e_1 & e_m\overline{R}e_2 & \cdots & e_m\overline{R}e_m 
\end{array} \right)
\simeq \left( \begin{array}{cccc}
\utri{n_1}{K} &  &   & 0 \\
 & \utri{n_2}{K} &  &  \\
 &  & \ddots &  \\
0 &  &  & \utri{n_m}{K} 
\end{array} \right).
\]
\end{proof}
\bigskip

\section{Proof of Theorem \ref{main_thm2}}
In this section, we keep the notations from the previous section. 
The aim of this section is to prove Theorem \ref{main_thm2}. 
First we describe the indecomposable $\overline{R}$-modules by using the indecomposable projective $R$-modules $P_{ij}$.
Next we consider a certain functor $F$ preserving the vanishing property $\ext{1}{}{-}{-}=0$, from the category $\mathcal{P}$ of $R$-modules whose projective dimension is at most one 
to $\mathrm{mod}\overline{R}$. 
We construct a bijection between $\tilt{R}$ and $\tilt{\overline{R}}$ by using the functor $F$ which gives Theorem \ref{main_thm2}.

We start by giving a classification of indecomposable $\overline{R}$-modules.
By Proposition \ref{factor_R/X}, it is shown that $\overline{R}$ is a Nakayama algebra.  
It is well-known that any indecomposable module over a Nakayama algebra is isomorphic to some subfactor of an indecomposable projective module (\cite{ASS}). 
By this fact and the unique composition series \eqref{cs} of $P_{ij}/\jac{P_{in_i}}$, 
\[
\{ P_{ik}/P_{il} \ | \ 1 \leq i \leq m, \ 1 \leq k < l \leq n_i \}
\]
is a complete set of indecomposable nonprojective $\overline{R}$-modules.

We put
\[
\lfac{i}{j}:=P_{ij}/\jac{P_{in_i}} \simeq e_{ij}\overline{R}
\]
for $1 \leq i \leq m$, $1 \leq j \leq n_i$ and 
\[
\fac{i}{k}{l}:=P_{ik}/P_{il}
\]
for $1 \leq i \leq m$, $1 \leq k < l \leq n_i$ for simplicity. 
These $R$-modules can be regarded as $\overline{R}$-modules. 

We have the following diagram for any $1 \leq i \leq m$ by our definitions.
\[
\begin{array}{ccccccccccc}
\lfac{i}{1}  & \longrightarrow & \fac{i}{1}{n_i} & \longrightarrow & \fac{i}{1}{n_i-1} & \longrightarrow & \cdots & \longrightarrow & \fac{i}{1}{3} & \longrightarrow & \fac{i}{1}{2} \\
\cup & & \cup & & \cup & & & & \cup & & \\
\lfac{i}{2}  & \longrightarrow & \fac{i}{2}{n_i} & \longrightarrow & \fac{i}{2}{n_i-1} & \longrightarrow & \cdots & \longrightarrow & \fac{i}{2}{3} &  & \\
\cup & & \cup & & \cup & & & &  & & \\
\vdots & & \vdots & & \vdots & & & &  & & \\
\cup  & & \cup & & \cup  & & & & & & \\
 \lfac{i}{n_i-2} & \longrightarrow & \fac{i}{n_i-2}{n_i} & \longrightarrow & \fac{i}{n_i-2}{n_i-1} & & & & & & \\
\cup  & & \cup & & & & & & & & \\
 \lfac{i}{n_i-1} & \longrightarrow & \fac{i}{n_i-1}{n_i} & & & & & & & & \\
\cup  & & & & & & & & & & \\
 \lfac{i}{n_i} & & & & & & & & & & 
\end{array}
\]
In the above diagram, right arrows mean natural epimorphisms. 
We remark that the above diagram is the AR-quiver of $\mathrm{mod}(\overline{R}e_i)$ (see Section $5$).
\bigskip

Let $\mathcal{P}$ be the category of $R$-modules whose projective dimension  is at most one.
We define the full subcategories $\mathcal{P}_i$ of $\mathcal{P}$ for $1 \leq i \leq m$ by
\[
\mathcal{P}_i:=\mathrm{add}\{ P_{ij}, \ \fac{i}{k}{l} \ | \ 1 \leq j \leq n_i, \ 1 \leq k < l \leq n_i \}.
\]
By Theorem \ref{classify_pd1}, we have 
\[
\mathcal{P}=\mathrm{add}(\mathcal{P}_1 \cup \mathcal{P}_2 \cup \cdots \cup \mathcal{P}_m).
\]

A key role is played by the functor
\[
F:=-\otimes_R \overline{R} :\mathcal{P} \xrightarrow{\hspace{1cm}} \mathrm{mod}\overline{R}.
\]

\begin{lemma}\label{fun_R/X}
Under the hypotheses above, the following hold.
\begin{enumerate}
\def\labelenumi{(\theenumi)}   
\item The functor $F$ induces a bijection between the isomorphism classes of $R$-modules which lie in $\mathcal{P}$ and 
the isomorphism classes of $\overline{R}$-modules.
\item The restriction on $F$ to $\mathcal{P}_i$ induces a one to one correspondecne between
the isomorphism classes of $\mathcal{P}_i$ and the isomorphism classes of $\mathrm{mod}(\overline{R}e_i)$.
\end{enumerate}
\end{lemma}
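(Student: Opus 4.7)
The plan is to compute $F$ explicitly on indecomposable modules and then combine this with Krull--Schmidt, Theorem \ref{classify_pd1}, and the classification of indecomposable $\overline{R}$-modules recalled just above.

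First I would identify $F(M)$ for each indecomposable $M \in \mathcal{P}$. Since $F(M)=M\otimes_R(R/I)\cong M/MI$, this reduces to computing $MI$. For a projective $M=P_{ij}=e_{ij}R$, orthogonality of the primitive idempotents together with the decomposition $I=\bigoplus_{k,l}I_{kl}$ with $I_{kl}\subset e_{kl}R$ yields $P_{ij}I=e_{ij}I=I_{ij}=\kjac{n_i-j+1}{P_{ij}}=\jac{P_{in_i}}$, where the last identity uses $P_{ij}=\kjac{j-1}{P_{i1}}$. Hence $F(P_{ij})\cong \lfac{i}{j}$. For a non-projective indecomposable $M=\fac{i}{k}{l}=P_{ik}/P_{il}$ with $1\leq k<l\leq n_i$, the same calculation gives $P_{ik}I=\jac{P_{in_i}}$, which is contained in $P_{il}$ because $l\leq n_i$ forces $\kjac{n_i}{P_{i1}}\subset \kjac{l-1}{P_{i1}}=P_{il}$. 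Consequently $\fac{i}{k}{l}\cdot I=0$, and $F(\fac{i}{k}{l})\cong \fac{i}{k}{l}$ as an $\overline{R}$-module.

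Next I would deduce (1). By Theorem \ref{classify_pd1} together with the fact that indecomposables of projective dimension zero are the $P_{ij}$, the indecomposable modules in $\mathcal{P}$ are exactly the $P_{ij}$ and the $\fac{i}{k}{l}$; as recalled just above, the indecomposable $\overline{R}$-modules are exactly the $\lfac{i}{j}$ and the $\fac{i}{k}{l}$. The computation of the previous paragraph identifies these two lists via $F$, and the target objects are pairwise non-isomorphic (the $\lfac{i}{j}$ are distinguished by their tops, the $\fac{i}{k}{l}$ by their tops together with their lengths, and the two classes are separated by projectivity over $\overline{R}$). Since $F$ is additive and both module categories are Krull--Schmidt, this bijection on indecomposables extends to a bijection on isomorphism classes by recording the multiplicities of indecomposable summands, proving (1).

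For (2) I would invoke Proposition \ref{factor_R/X}, which exhibits the $e_i$ as orthogonal central idempotents splitting $\overline{R}$ into $\prod_i \overline{R}e_i$. The resulting decomposition $\mathrm{mod}\overline{R}\simeq\prod_i\mathrm{mod}(\overline{R}e_i)$ shows that an indecomposable $\overline{R}$-module lies in $\mathrm{mod}(\overline{R}e_i)$ precisely when all its composition factors are simple tops of the $\lfac{i}{j}$; in view of the classification these are exactly the $\lfac{i}{j}$ and the $\fac{i}{k}{l}$ with the fixed $i$. By Step 1 these are the $F$-images of exactly the indecomposables appearing in $\mathcal{P}_i$, so the bijection of (1) restricts to the asserted one. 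The main subtlety in the argument is the bookkeeping in Step 1, namely verifying that $P_{ij}I=\jac{P_{in_i}}$ and that this submodule sits inside $P_{il}$ whenever $l\leq n_i$; once these annihilation identities are established, the remainder is formal.
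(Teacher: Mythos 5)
Your proposal is correct and follows essentially the same route as the paper: compute $F$ on the indecomposable objects of $\mathcal{P}$, observe that these are sent bijectively onto the indecomposable $\overline{R}$-modules, and conclude by Krull--Schmidt and the block decomposition $\overline{R}\simeq\prod_i\overline{R}e_i$ for part (2). You have simply filled in the details the paper leaves implicit, namely the identities $P_{ij}I=\kjac{n_i}{P_{i1}}=\jac{P_{in_i}}$ and $\fac{i}{k}{l}\cdot I=0$, and the verification that the target list of indecomposable $\overline{R}$-modules is repetition-free.
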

\begin{proof}
We calculate $F(M)$ for an indecomposable $R$-module $M$ which lies in $\mathcal{P}$. 
We have isomorphisms
\begin{eqnarray*}
F(P_{ij}) = P_{ij} \otimes_R \overline{R} \simeq P_{ij}/(P_{ij} I)  = P_{ij}/\kjac{n_i}{P_{i1}} = \overline{P}_{ij},
\end{eqnarray*}
\begin{eqnarray*}
F(\fac{i}{k}{l}) = \fac{i}{k}{l} \otimes_R \overline{R} \simeq \fac{i}{k}{l}/(\fac{i}{k}{l} I) = \fac{i}{k}{l}.
\end{eqnarray*}
for $1 \leq i \leq m$, $1 \leq j \leq n_i$ and $1 \leq k < l \leq n_i$.
The assertion follows.
\end{proof}

Now we state a theorem that gives a bijection between $\tilt{R}$ and $\tilt{\overline{R}}$ by using the functor $F$.

\begin{thm}\label{main_thm}
Under the hypotheses above, every tilting $R$-module lies in $\mathcal{P}$ and we have a bijection
\[
F:\tilt{R} \ni T \longmapsto F(T) \in \tilt{\overline{R}}.
\]
\end{thm}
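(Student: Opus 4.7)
The first claim --- that every tilting $R$-module $T$ lies in $\mathcal{P}$ --- is immediate from Definition \ref{def_tilt}(1). Moreover, Lemma \ref{fun_R/X} already gives a bijection between the isomorphism classes of objects in $\mathcal{P}$ and those of $\mathrm{mod}\overline{R}$, so the real content of the theorem is that, under this bijection of objects, tilting modules correspond to tilting modules.

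My plan is to verify the correspondence via Bongartz's characterisation of tilting modules: for a finite-dimensional algebra $A$, a module $T$ with $\pd T \leq 1$ and $\ext{1}{A}{T}{T} = 0$ is a tilting module if and only if the number of pairwise non-isomorphic indecomposable direct summands of $T$ equals the number of isomorphism classes of simple $A$-modules. Fix $T \in \mathcal{P}$ and set $U := F(T)$. First, $\pd_{\overline{R}} U \leq 1$ holds automatically since Proposition \ref{factor_R/X} presents $\overline{R}$ as a product of the hereditary algebras $\utri{n_i}{K}$. Second, the vanishing $\ext{1}{R}{T}{T} = 0$ is equivalent to $\ext{1}{\overline{R}}{U}{U} = 0$ by the Ext-preservation Proposition \ref{prop_ext}, to be established beforehand. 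Third, the numbers of non-isomorphic indecomposable direct summands of $T$ and of $U$ agree by Lemma \ref{fun_R/X}, while $R$ and $\overline{R}$ have the same number of simple modules since $I \subset \jac{R}$. Combining these, $T$ is tilting over $R$ if and only if $U$ is tilting over $\overline{R}$, which yields the desired bijection $F\colon\tilt{R} \to \tilt{\overline{R}}$.

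The principal obstacle is establishing Proposition \ref{prop_ext} cleanly. My approach there would be first to verify that $\tor{1}{R}{M}{\overline{R}} = 0$ for every $M \in \mathcal{P}$: by Lemma \ref{lemma_pd1} a minimal projective presentation of an indecomposable in $\mathcal{P}$ reduces to an inclusion $P_{ij} \hookrightarrow P_{il}$ with $j \geq l$, and applying $-\otimes_R \overline{R}$ to this inclusion yields the natural map $\lfac{i}{j} \to \lfac{i}{l}$, which remains injective because $\jac{P_{in_i}} \subset P_{il}$. Ext-preservation then follows from the standard comparison of $\homo{R}{-}{-}$ and $\homo{\overline{R}}{F(-)}{F(-)}$ on projective resolutions, the crucial ingredient being that every $\overline{R}$-homomorphism between $F$-images of indecomposable projectives lifts to an $R$-homomorphism between the originals --- a consequence of the structural Lemmas \ref{key_lemma1} and \ref{key_lemma2} together with the Hom computations already carried out in the proof of Proposition \ref{factor_R/X}.
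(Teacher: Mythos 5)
Your proof of Theorem~\ref{main_thm} itself coincides with the paper's: both reduce, via the equivalence of Proposition~\ref{equ_tilt} (your ``Bongartz's characterisation''), to the statement that $F$ preserves the number of non-isomorphic indecomposable summands (from Lemma~\ref{fun_R/X}), that $R$ and $\overline{R}$ have the same number of simples, and that $\ext{1}{R}{T}{T}=0$ iff $\ext{1}{\overline{R}}{F(T)}{F(T)}=0$ (Proposition~\ref{prop_ext}). Your remark that $\pd_{\overline{R}}F(T)\leq 1$ holds automatically because $\overline{R}$ is hereditary is correct, and is left implicit in the paper.

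Where you diverge is in your sketch of how to obtain Proposition~\ref{prop_ext}, so let me comment briefly on that. Your observation that $\tor{1}{R}{M}{\overline{R}}=0$ for $M\in\mathcal{P}$ is correct and is a genuinely slicker way to see that $F$ carries projective resolutions in $\mathcal{P}$ to projective resolutions over $\overline{R}$. However, surjectivity of the lifting maps $\homo{R}{P}{Q}\to\homo{\overline{R}}{F(P)}{F(Q)}$ for indecomposable projectives $P,Q$ only gives a \emph{surjection} $\ext{1}{R}{T}{T}\twoheadrightarrow\ext{1}{\overline{R}}{F(T)}{F(T)}$, hence only the implication $\ext{1}{R}{T}{T}=0\Rightarrow\ext{1}{\overline{R}}{F(T)}{F(T)}=0$. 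The converse is needed for the bijection to be well-defined and surjective, and for that one must control the kernel of the lifting map: concretely, one needs that if $g\colon P_{il}\to P_{ij}$ has $\image{g}\subset\jac{P_{in_i}}$ then $g$ factors through the inclusion $P_{il}\hookrightarrow P_{ik}$, which is precisely Lemma~\ref{key_lemma2}(2). This closes the gap and in fact yields an isomorphism of Ext groups, stronger than the paper's Proposition~\ref{prop_ext1}(3), which only verifies simultaneous vanishing by computing the condition ``$j\leq k$ or $l<j$'' on both sides. So your route is viable and arguably more conceptual, but as written it proves only half of the needed equivalence.
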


As a consequence of Theorem \ref{main_thm}, we have the following corollary immediately. 

\begin{cor}\label{main_cor}
Under the hypotheses  above, we have a bijection
\[
\tilt{R} \ni T \longmapsto (F(T)e_1,\cdots,F(T)e_m) \in \tilt{\overline{R}e_1} \times \cdots \times \tilt{\overline{R}e_m}.
\]
\end{cor}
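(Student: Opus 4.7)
The plan is to factor the claimed bijection through Theorem \ref{main_thm}, using the block decomposition of $\overline{R}$ provided by Proposition \ref{factor_R/X}. By Theorem \ref{main_thm}, the assignment $T \mapsto F(T)$ already gives a bijection $\tilt{R} \xrightarrow{\sim} \tilt{\overline{R}}$, so what remains is to exhibit a bijection
\[
\tilt{\overline{R}} \xrightarrow{\hspace{0.5cm}} \tilt{\overline{R}e_1} \times \cdots \times \tilt{\overline{R}e_m}, \quad U \longmapsto (Ue_1, \ldots, Ue_m),
\]
and then compose. Note that $F(T)e_i$ is $F(T) \otimes_{\overline{R}} \overline{R}e_i$ under this decomposition, so the two-step map coincides with the map in the statement.

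The second bijection is a standard consequence of the direct-product decomposition. By Proposition \ref{factor_R/X}(1), the $e_i$ are orthogonal central idempotents of $\overline{R}$, and $e_1 + \cdots + e_m = 1$, so $\overline{R} = \overline{R}e_1 \times \cdots \times \overline{R}e_m$ as $K$-algebras and $\mathrm{mod}\overline{R} = \prod_i \mathrm{mod}(\overline{R}e_i)$ via $U \mapsto (Ue_1, \ldots, Ue_m)$. Under this block decomposition, $\ext{1}{\overline{R}}{Ue_i}{Ue_j} = 0$ for $i \neq j$, projective dimension is computed blockwise, and any short exact sequence $0 \to \overline{R} \to U_0 \to U_1 \to 0$ splits as the direct sum of its components $0 \to \overline{R}e_i \to U_0 e_i \to U_1 e_i \to 0$. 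Hence the three defining conditions of Definition \ref{def_tilt} are satisfied for $U$ over $\overline{R}$ if and only if they are satisfied for each $Ue_i$ over $\overline{R}e_i$. This gives the required bijection between $\tilt{\overline{R}}$ and $\prod_i \tilt{\overline{R}e_i}$.

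Since both bijections are formal, there is essentially no obstacle: the only point to be verified is that the tilting conditions decompose under the block structure, which is routine once one observes that $\overline{R}e_i \cdot \overline{R}e_j = 0$ for $i \neq j$. The corollary then follows by composing the bijection $F$ of Theorem \ref{main_thm} with the blockwise decomposition map, and this composition is exactly $T \mapsto (F(T)e_1, \ldots, F(T)e_m)$.
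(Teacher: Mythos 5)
Your proposal is correct and matches the paper's intent: the paper states that Corollary \ref{main_cor} follows \emph{immediately} from Theorem \ref{main_thm}, and what you have written is precisely the routine verification being elided, namely composing the bijection $T \mapsto F(T)$ of Theorem \ref{main_thm} with the blockwise decomposition of $\mathrm{mod}\,\overline{R}$ furnished by the central orthogonal idempotents $e_1,\dots,e_m$ of Proposition \ref{factor_R/X}. No gaps; the only thing the paper treats as standard is the fact that tilting modules over a finite product of algebras decompose componentwise, which you have correctly justified.
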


Hence by Proposition \ref{factor_R/X}, we have Theorem \ref{main_thm2}.
In the rest of this section, we prove of Theorem \ref{main_thm} .
\bigskip

We have to know when $\ext{1}{R}{X}{Y}=0$ holds for $X,Y \in \mathcal{P}$.
We start with the following result for $\ext{1}{R}{\mathcal{P}_i}{\mathcal{P}_u}$.

\begin{lemma}\label{ext_1}
$\ext{1}{R}{\mathcal{P}_i}{\mathcal{P}_u}=0$ if $i \neq u$. 
\end{lemma}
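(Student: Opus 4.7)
The plan is to reduce to indecomposables and then verify the Ext vanishing by lifting morphisms along a very short projective resolution. Since $\mathcal{P}_i$ and $\mathcal{P}_u$ are additive closures of the modules $\{P_{ij}\} \cup \{\fac{i}{k}{l}\}$ and $\{P_{us}\} \cup \{\fac{u}{s}{t}\}$ respectively, and $\ext{1}{R}{-}{-}$ is additive in each variable, it suffices to show $\ext{1}{R}{X}{Y}=0$ for $X$ and $Y$ ranging over these indecomposables. When $X$ is projective the vanishing is immediate, so the only substantive case is $X=\fac{i}{k}{l}$ for some $1 \leq k < l \leq n_i$.

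For such $X$, the canonical sequence
\[
0 \longrightarrow P_{il} \xrightarrow{\ \iota\ } P_{ik} \longrightarrow \fac{i}{k}{l} \longrightarrow 0
\]
is a projective resolution, so $\ext{1}{R}{X}{Y}$ is the cokernel of $\homo{R}{\iota}{Y}$. What I would do next is show directly that every $f:P_{il}\to Y$ extends along $\iota$ to some $\tilde{f}:P_{ik}\to Y$. Note that $\iota$ is exactly the kind of monomorphism required by Lemma \ref{key_lemma2}, namely a monomorphism between indecomposable projectives with common first index $i$ and source index $l$ larger than the target index $k$ (this is where the assumption $k<l$ is used).

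If $Y=P_{us}$ with $u\neq i$, then $f:P_{il}\to P_{us}$ has $\image{f}\subset\jac{P_{un_u}}$ by Lemma \ref{key_lemma1}(4), hence is not monic, and Lemma \ref{key_lemma2}(2) produces the desired $h:P_{ik}\to P_{us}$ with $f=h\iota$. If instead $Y=\fac{u}{s}{t}$ with $u\neq i$, projectivity of $P_{il}$ lifts $f$ to a morphism $g:P_{il}\to P_{us}$ through the quotient $\pi:P_{us}\to \fac{u}{s}{t}$; the first case then provides $\tilde{g}:P_{ik}\to P_{us}$ with $g=\tilde{g}\iota$, and $\pi\tilde{g}$ is the sought extension of $f$.

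The only step that requires any care is the bookkeeping in invoking Lemma \ref{key_lemma2}(2): one has to match the two index conventions (source index of $\iota$ is $l$, target is $k$, and the lemma needs source $\geq$ target, which is exactly $l\geq k$, guaranteed by $k<l$). Aside from that, the argument is a direct concatenation of Lemmas \ref{key_lemma1}(4) and \ref{key_lemma2}(2), and no further computation is needed.
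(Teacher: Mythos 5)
Your proof is correct. The projective case and the case $Y=P_{us}$ (factor module against projective) follow exactly the same route as the paper's argument: take the resolution $0\to P_{il}\xrightarrow{\iota} P_{ik}\to \fac{i}{k}{l}\to 0$, note that $i\neq u$ forces the socle of $P_{il}$ to differ from that of $P_{us}$ so no map $P_{il}\to P_{us}$ is monic, and invoke Lemma \ref{key_lemma2}(2) to extend along $\iota$. Your index bookkeeping in applying Lemma \ref{key_lemma2} is correct: the monomorphism $\iota$ has source index $l$ and target index $k$ with $l>k$, matching the hypothesis $j\geq l$ of that lemma.

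For the remaining case $Y=\fac{u}{s}{t}$, you take a genuinely different route from the paper. You lift a given $f:P_{il}\to \fac{u}{s}{t}$ through the projective cover $\pi:P_{us}\to\fac{u}{s}{t}$ using projectivity of $P_{il}$, extend the lift along $\iota$ by the previous case, and compose back with $\pi$; this is a correct bootstrap. The paper instead observes directly that $\homo{R}{P_{il}}{\fac{u}{s}{t}}=0$, because the simple top of $P_{il}$ cannot occur among the composition factors of $\fac{u}{s}{t}$ when $i\neq u$, so the relevant $\mathrm{Ext}^1$ is a quotient of the zero group. The paper's observation is marginally shorter and avoids the lifting step, but your version has the advantage of making the factor-factor case a formal consequence of the factor-projective case, which is a clean reduction. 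Both arguments are sound.
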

\begin{proof}
It is obvious that $\ext{1}{R}{P_{ij}}{\mathcal{P}_u}=0$. 
We show $\ext{1}{R}{\fac{i}{k}{l}}{\mathcal{P}_u}=0$ for $1 \leq k < l \leq n_i$. 

First we show $\ext{1}{R}{\fac{i}{k}{l}}{P_{uv}}=0$ for $1 \leq v \leq n_u$. 
We take a projective resolution
\begin{eqnarray}
0 \longrightarrow P_{il} \xrightarrow{\ f \ } P_{ik} \longrightarrow \fac{i}{k}{l} \longrightarrow 0 \label{proj1}
\end{eqnarray}
of $\fac{i}{k}{l}$ in $\mathrm{mod}R$. By applying $\homo{R}{-}{P_{uv}}$ to the above exact  sequence, 
we have an exact sequence
\[
\homo{R}{P_{ik}}{P_{uv}} \xrightarrow{\homo{}{f}{P_{uv}}} 
\homo{R}{P_{il}}{P_{uv}} \longrightarrow \ext{1}{R}{\fac{i}{k}{l}}{P_{uv}} \longrightarrow 0 .
\]
By the assumption $i \neq u$, there is no monomorphism from $P_{il}$ to $P_{uv}$ since the simple socles $\socle{P_{il}}$ and $\socle{P_{uv}}$ are not isomorphic. 
By Lemma \ref{key_lemma2} (2), $\homo{}{f}{P_{uv}}$ is an epimorphism. 
Therefore we have $\ext{1}{R}{\fac{i}{k}{l}}{P_{uv}}=0$.

Next we show $\ext{1}{R}{\fac{i}{k}{l}}{\fac{u}{s}{t}}=0$ for $1 \leq s < t \leq n_u$.
By applying $\homo{R}{-}{\fac{u}{s}{t}}$ to \eqref{proj1}, we have an exact sequence
\[
\homo{R}{P_{ik}}{\fac{u}{s}{t}} \longrightarrow 
\homo{R}{P_{il}}{\fac{u}{s}{t}} \longrightarrow \ext{1}{R}{\fac{i}{k}{l}}{\fac{u}{s}{t}} \longrightarrow 0.
\]
By the assumption $i \neq u$, $P_{il}/\jac{P_{il}}$ does not appear in composition factors of $\fac{u}{s}{t}$. 
Therefore we have $\homo{R}{P_{il}}{\fac{u}{s}{t}}=0$, and so $\ext{1}{R}{\fac{i}{k}{l}}{\fac{u}{s}{t}}=0$.
\end{proof}

Next we consider $\ext{1}{R}{\mathcal{P}_i}{\mathcal{P}_i}=0$.
We need the following result.

\begin{lemma}\label{lemma_ext}
For any $1 \leq i \leq m$, $1 \leq j \leq n_i$ and $\ 1 \leq k < l \leq n_i$, the natural epimorphism $\varphi:P_{ij} \longrightarrow \lfac{i}{j}$ induces an isomorphism
\[
\homo{}{\varphi}{\fac{i}{k}{l}} : \homo{\overline{R}}{\lfac{i}{j}}{\fac{i}{k}{l}} = \homo{R}{\lfac{i}{j}}{\fac{i}{k}{l}} \longrightarrow \homo{R}{P_{ij}}{\fac{i}{k}{l}} .
\]
\end{lemma}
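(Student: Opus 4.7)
The plan is to apply the contravariant functor $\homo{R}{-}{\fac{i}{k}{l}}$ to the defining short exact sequence
$$0 \longrightarrow \jac{P_{in_i}} \longrightarrow P_{ij} \xrightarrow{\;\varphi\;} \lfac{i}{j} \longrightarrow 0.$$
Left exactness makes $\homo{}{\varphi}{\fac{i}{k}{l}}$ injective for free, so the task reduces to showing that every $f \in \homo{R}{P_{ij}}{\fac{i}{k}{l}}$ vanishes on the submodule $\jac{P_{in_i}} \subset P_{ij}$, equivalently that $f$ factors through $\varphi$.

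To analyse a fixed $f$, I would use projectivity of $P_{ij}$ to lift it to $\tilde f : P_{ij} \to P_{ik}$ along the canonical surjection $\pi:P_{ik} \to \fac{i}{k}{l}$, and then split into the two cases supplied by Lemma \ref{key_lemma1}. If $\tilde f$ is not monic, Lemma \ref{key_lemma1}(2) gives $\image{\tilde f} \subset \jac{P_{in_i}} \subset P_{in_i} \subset P_{il}$, the last two inclusions using $l \leq n_i$; composing with $\pi$ gives $f=0$, and there is nothing further to check.

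The substantive case is when $\tilde f$ is monic. Then Lemma \ref{key_lemma1}(1) forces $j \geq k$ and $\image{\tilde f} = P_{ij}$ as a submodule of $P_{ik}$. The crucial observation is the intrinsic identification
$$\jac{P_{in_i}} \ =\ \kjac{n_i-j+1}{P_{ij}} \ =\ P_{ij} \cdot \jac{R}^{\,n_i-j+1},$$
which follows by iterating Definition \ref{def_Harada}(b). Since $\tilde f$ is right $R$-linear,
$$\tilde f(\jac{P_{in_i}}) \ =\ \tilde f(P_{ij}) \cdot \jac{R}^{\,n_i-j+1} \ =\ P_{ij} \cdot \jac{R}^{\,n_i-j+1} \ =\ \jac{P_{in_i}}$$
viewed as a submodule of $P_{ik}$, and $\jac{P_{in_i}} \subset P_{in_i} \subset P_{il}$. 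Hence $\pi\,\tilde f\!\mid_{\jac{P_{in_i}}} = 0$, i.e.\ $f$ kills $\jac{P_{in_i}}$ as required.

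The main technical point is pinning down the correct index at which $\jac{P_{in_i}}$ sits in the Jacobson radical filtrations of $P_{ij}$ and $P_{ik}$; once this intrinsic description is in hand, $R$-linearity of $\tilde f$ and the inclusion $\jac{P_{in_i}} \subset P_{il}$ do the rest, and the non-monic branch is an immediate consequence of the chain structure of indecomposable projectives guaranteed by Definition \ref{def_Harada}(b).
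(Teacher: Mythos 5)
Your proof is correct, but it takes a noticeably longer route than the paper's. The paper's argument is essentially one line: since $\fac{i}{k}{l}=P_{ik}/P_{il}$ is an $\overline{R}=R/I$-module (because $P_{ik}I=\jac{P_{in_i}}\subset P_{il}$), any $R$-linear map $f:P_{ij}\to\fac{i}{k}{l}$ satisfies $f(P_{ij}I)=f(P_{ij})\cdot I\subset \fac{i}{k}{l}\cdot I=0$, so $\kernel{f}\supset P_{ij}I=\jac{P_{in_i}}=\kernel{\varphi}$ and $f$ factors through $\varphi$. No lift, no case split, and no appeal to Lemma \ref{key_lemma1} is needed. You instead lift $f$ along the projective cover $\pi:P_{ik}\to\fac{i}{k}{l}$ to $\tilde f:P_{ij}\to P_{ik}$ using projectivity of $P_{ij}$, then invoke Lemma \ref{key_lemma1}(1),(2) to control $\image{\tilde f}$ in the monic and non-monic cases separately; this is valid, and your monic-case computation $\tilde f(\jac{P_{in_i}})=\tilde f(P_{ij})\jac{R}^{n_i-j+1}=P_{ij}\jac{R}^{n_i-j+1}=\jac{P_{in_i}}\subset P_{il}$ is exactly an unwinding of the identity $P_{ij}I=\jac{P_{in_i}}$ that the paper uses directly. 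What the paper's route buys is that it does not depend on a choice of lift, avoids the case distinction entirely, and makes clear that the statement is really just the adjunction isomorphism $\homo{\overline{R}}{P_{ij}\otimes_R\overline{R}}{-}\simeq\homo{R}{P_{ij}}{-}$ applied to an $\overline{R}$-module; your approach trades that conceptual clarity for a concrete picture of where $\jac{P_{in_i}}$ lands inside $P_{ik}$, which is more hands-on but not as economical.
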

\begin{proof}
It is obvious that $\homo{R}{\lfac{i}{j}}{\fac{i}{k}{l}} = \homo{\overline{R}}{\lfac{i}{j}}{\fac{i}{k}{l}}$ holds. 
We show that $\homo{}{\varphi}{\fac{i}{k}{l}}$ is an isomorphism. 

Since $\varphi$ is epic, we have that $\homo{}{\varphi}{\fac{i}{k}{l}}$ is monic. 
Since any $f \in \homo{R}{P_{ij}}{\fac{i}{k}{l}}$ satisfies $\kernel{f} \supset P_{ij} I = \kernel{\varphi}$, we have that $f$ factors through $\varphi$. 
Thus $\homo{}{\varphi}{\fac{i}{k}{l}}$ is an isomorphism.
\end{proof}

\begin{prop}\label{prop_ext1}
The following hold.
\begin{enumerate}
\def\labelenumi{(\theenumi)} 
\item  For $1 \leq j \leq n_i$, $\ext{1}{R}{P_{ij}}{\mathcal{P}}=0=\ext{1}{\overline{R}}{\lfac{i}{j}}{\mathrm{mod}\overline{R}}$.
\item For $1 \leq k < l \leq n_i$, $1 \leq s < t \leq n_i$, there is a $K$-vector space isomorphism
\[
\ext{1}{R}{\fac{i}{k}{l}}{\fac{i}{s}{t}} \simeq \ext{1}{\overline{R}}{\fac{i}{k}{l}}{\fac{i}{s}{t}}.
\]
\item For $1 \leq k < l \leq n_i$, $1 \leq j \leq n_i$, 
$\ext{1}{R}{\fac{i}{k}{l}}{P_{ij}}=0$ if and only if $\ext{1}{\overline{R}}{\fac{i}{k}{l}}{\lfac{i}{j}}=0$.
\end{enumerate}
\end{prop}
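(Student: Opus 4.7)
\textbf{Proof plan for Proposition \ref{prop_ext1}.}

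Part (1) is immediate: $P_{ij}$ is a projective $R$-module by construction, and the isomorphism $\lfac{i}{j} \simeq e_{ij}\overline{R}$ recorded before Proposition \ref{factor_R/X} shows that $\lfac{i}{j}$ is a projective $\overline{R}$-module.

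For (2) and (3) the starting point is the pair of short exact sequences
\[
0 \longrightarrow P_{il} \xrightarrow{\ f\ } P_{ik} \longrightarrow \fac{i}{k}{l} \longrightarrow 0,
\qquad
0 \longrightarrow \lfac{i}{l} \xrightarrow{\ \bar f\ } \lfac{i}{k} \longrightarrow \fac{i}{k}{l} \longrightarrow 0.
\]
The first is a projective resolution over $R$ (from $k<l$ and Lemma \ref{key_lemma1}); the second is a projective resolution over $\overline{R}$, since by Proposition \ref{factor_R/X} the algebra $\overline{R}e_i \simeq \utri{n_i}{K}$ is Nakayama and $\lfac{i}{k}$ is the projective cover of $\fac{i}{k}{l}$ with kernel $\lfac{i}{l}$. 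Here $\bar f$ is the reduction of $f$ modulo $\jac{P_{in_i}}$. Applying $\homo{R}{-}{N}$ and $\homo{\overline{R}}{-}{\overline{N}}$ to the respective sequences presents $\ext{1}{R}{\fac{i}{k}{l}}{N}$ and $\ext{1}{\overline{R}}{\fac{i}{k}{l}}{\overline{N}}$ as cokernels of the induced maps $f^{*}$ and $\bar f^{*}$.

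For (2) I take $N=\overline{N}=\fac{i}{s}{t}$. Lemma \ref{lemma_ext} supplies natural $K$-isomorphisms $\homo{R}{P_{ij}}{\fac{i}{s}{t}} \simeq \homo{\overline{R}}{\lfac{i}{j}}{\fac{i}{s}{t}}$ for $j\in\{k,l\}$. By naturality these identifications intertwine $f^{*}$ with $\bar f^{*}$ (the square $P_{il}\to P_{ik}\twoheadrightarrow\lfac{i}{k}$ coincides with $P_{il}\twoheadrightarrow\lfac{i}{l}\to\lfac{i}{k}$), so their cokernels are isomorphic, yielding the desired iso.

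For (3) I take $N=P_{ij}$ and $\overline{N}=\lfac{i}{j}$; Lemma \ref{lemma_ext} no longer applies, so I compare the two Ext groups by showing both vanish under exactly the same condition, namely \emph{$l<j$ or $k\geq j$}. On the $R$-side, by Lemma \ref{key_lemma2}(2) every non-monic $\phi:P_{il}\to P_{ij}$ factors through $f$, so only monic $\phi$ can obstruct surjectivity of $f^{*}$; such $\phi$ exist iff $l\geq j$ (Lemma \ref{key_lemma1}(1)), and then $\image\phi=P_{il}\not\subset\jac{P_{in_i}}$. Any lift $\psi:P_{ik}\to P_{ij}$ must have $\image\psi\supset P_{il}$, hence $\image\psi\not\subset\jac{P_{in_i}}$, so $\psi$ is monic (Lemma \ref{key_lemma1}(2)), which forces $k\geq j$ (Lemma \ref{key_lemma1}(1)); conversely, when $k\geq j$ the lift is produced by Lemma \ref{key_lemma2}(1). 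On the $\overline{R}$-side, the Hom computation in the proof of Proposition \ref{factor_R/X}(1) gives $\homo{\overline{R}}{\lfac{i}{a}}{\lfac{i}{b}}\simeq K$ when $a\geq b$ and $0$ otherwise, generated by the natural map induced from the inclusion $P_{ia}\hookrightarrow P_{ib}$. Hence $\bar f^{*}:\homo{\overline{R}}{\lfac{i}{k}}{\lfac{i}{j}}\to\homo{\overline{R}}{\lfac{i}{l}}{\lfac{i}{j}}$ is surjective exactly when $l<j$ (both sides $0$) or $k\geq j$ (both sides $K$ with $\bar f^{*}$ nonzero, since restricting the generator along $\lfac{i}{l}\hookrightarrow\lfac{i}{k}$ gives the generator). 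The two conditions agree, completing (3).

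The main obstacle is the case analysis in (3): one has to verify that the monic lifts that obstruct surjectivity of $f^{*}$ on the $R$-side correspond precisely to the nontrivial pattern of one-dimensional Hom spaces on the $\overline{R}$-side. Once the right targets are fixed and Lemma \ref{lemma_ext} is invoked, (2) is essentially formal, and (1) is automatic.
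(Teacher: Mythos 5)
Your proof is correct and follows essentially the same route as the paper: part (1) is identical, part (2) uses the same pair of projective resolutions over $R$ and $\overline{R}$ together with the identifications from Lemma \ref{lemma_ext} and the commutative square $\varphi f = f'\varphi'$ to compare cokernels, and part (3) reduces both $\mathrm{Ext}^1$ groups to surjectivity of $f^*$ and $\bar f^*$ and checks that each holds precisely when $j\leq k$ or $l<j$. The only cosmetic difference is in the non-vanishing case of (3): you argue via the image of a would-be lift $\psi$ landing outside $\jac{P_{in_i}}$ and hence being monic, whereas the paper argues that $gf$ has nonzero kernel containing the socle of $P_{il}$; both yield the same dichotomy. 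You also spell out the $\overline{R}$-side Hom computation that the paper leaves as "the same argument," which is a helpful addition but not a different method.
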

\begin{proof}
(1) Obvious. 

(2) We have a natural projective resolution
\begin{eqnarray}
0 \longrightarrow P_{il} \xrightarrow{\ f \ } P_{ik} \longrightarrow \fac{i}{k}{l} \longrightarrow 0 \label{proj2}
\end{eqnarray}
of $\fac{i}{k}{l}$ in $\mathrm{mod}R$ and a natural projective resolution 
\begin{eqnarray}
0 \longrightarrow \lfac{i}{l} \xrightarrow{\ f' \ } \lfac{i}{k} \longrightarrow \fac{i}{k}{l} \longrightarrow 0 \label{proj3}
\end{eqnarray}
of $\fac{i}{k}{l}$ in $\mathrm{mod}\overline{R}$. 
For natural epimorphisms $\varphi:P_{ik} \longrightarrow \lfac{i}{k}$ and $\varphi':P_{il} \longrightarrow \lfac{i}{l}$, we have the following commutative diagram.
\[
\xymatrix{
0 \ar[r] & P_{il} \ar[r]^f \ar[d]_{\varphi'} & P_{ik} \ar[r] \ar[d]^{\varphi} & \fac{i}{k}{l} \ar[r] & 0 \\
0 \ar[r] & \lfac{i}{l} \ar[r]^{f'} & \lfac{i}{k} \ar[r] & \fac{i}{k}{l} \ar[r] & 0  
}
\]
By applying $\homo{R}{-}{\fac{i}{s}{t}}$ to the upper row and applying $\homo{\overline{R}}{-}{\fac{i}{s}{t}}$ to the lower row, 
we have the following commutative diagram.
\[
\xymatrix{
\homo{\overline{R}}{\lfac{i}{k}}{\fac{i}{s}{t}} \ar[r] \ar@{=}[d] & \homo{\overline{R}}{\lfac{i}{l}}{\fac{i}{s}{t}} \ar[r] \ar@{=}[d] & 
\ext{1}{\overline{R}}{\fac{i}{k}{l}}{\fac{i}{s}{t}} \ar[r] & 0 \\
\homo{R}{\lfac{i}{k}}{\fac{i}{s}{t}} \ar[d]_{\homo{}{\varphi}{\fac{i}{s}{t}}} & \homo{R}{\lfac{i}{l}}{\fac{i}{s}{t}} \ar[d]^{\homo{}{\varphi'}{\fac{i}{s}{t}}} &  \\
\homo{R}{P_{ik}}{\fac{i}{s}{t}} \ar[r] & \homo{R}{P_{il}}{\fac{i}{s}{t}} \ar[r] & \ext{1}{R}{\fac{i}{k}{l}}{\fac{i}{s}{t}} \ar[r] & 0 
}
\]
By Lemma \ref{lemma_ext}, $\homo{}{\varphi}{\fac{i}{s}{t}}$ and $\homo{}{\varphi'}{\fac{i}{s}{t}}$ are isomorphisms. 
Consequently we have an isomorphism $\ext{1}{R}{\fac{i}{k}{l}}{\fac{i}{s}{t}} \simeq \ext{1}{\overline{R}}{\fac{i}{k}{l}}{\fac{i}{s}{t}}$.

(3) By applying $\homo{R}{-}{P_{ij}}$ to \eqref{proj2}, we have an exact sequence
\[
\homo{R}{P_{ik}}{P_{ij}} \xrightarrow{\homo{}{f}{P_{ij}}} \homo{R}{P_{il}}{P_{ij}} \longrightarrow 
\ext{1}{R}{\fac{i}{k}{l}}{P_{ij}} \longrightarrow 0.
\]
It can be seen that $\ext{1}{R}{\fac{i}{k}{l}}{P_{ij}}=0$ if and only if $\homo{}{f}{P_{ij}}$ is an epimorphism.

We show that $\homo{}{f}{P_{ij}}$ is an epimorphism if and only if $j \leq k$ or $l<j$. 
First we assume that $j>k$ and $l \geq j$. 
By $l \geq j$, there exists a monomorphism from $P_{il}$ to $P_{ij}$. 
But there are no monomorhisms from $P_{ik}$ to $P_{ij}$ by $j>k$. 
Since $P_{ik}$ has simple socle, $gf$ is not monic for any $g \in \homo{R}{P_{ik}}{P_{ij}}$.
Thus $\homo{}{f}{P_{ij}}$ is not an epimorphism. 
Next we assume $j \leq k$. 
By Lemma \ref{key_lemma2} (1), $\homo{}{f}{P_{ij}}$ is an epimorphism. 
Finally we assume $l<j$. 
Then by $\leng{P_{il}} > \leng{P_{ij}}$, there are no monomorphisms from $P_{il}$ to $P_{ij}$. 
By Lemma \ref{key_lemma2} (2), $\homo{}{f}{P_{ij}}$ is an epimorphism. 

On the other hand, by applying $\homo{\overline{R}}{-}{\lfac{i}{j}}$ to \eqref{proj3}, we have an exact sequence
\[
\homo{\overline{R}}{\lfac{i}{k}}{\lfac{i}{j}} \xrightarrow{\homo{}{f'}{\lfac{i}{j}}} \homo{\overline{R}}{\lfac{i}{l}}{\lfac{i}{j}} \longrightarrow 
\ext{1}{\overline{R}}{\fac{i}{k}{l}}{\lfac{i}{j}} \longrightarrow 0.
\]
It can be seen that $\ext{1}{\overline{R}}{\fac{i}{k}{l}}{\lfac{i}{j}}=0$ if and only if $\homo{}{f'}{\lfac{i}{j}}$ is an epimorphism.
We can show that $\homo{}{f'}{\lfac{i}{j}}$ is an epimorphism if and only if $j \leq k$ or $l<j$ by the same argument as above. 

Consequently $\ext{1}{R}{\fac{i}{k}{l}}{P_{ij}}=0$ if and only if $j \leq k$ or $l<j$ 
which is equivalent to $\ext{1}{\overline{R}}{\fac{i}{k}{l}}{\lfac{i}{j}}=0$.
\end{proof}

By the Proposition \ref{prop_ext1}, we have the following proposition.

\begin{prop}\label{prop_ext}
For any $T \in \mathcal{P}$, we have that $\ext{1}{R}{T}{T}=0$ if and only if $\ext{1}{\overline{R}}{F(T)}{F(T)}=0$.
\end{prop}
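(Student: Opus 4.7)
The plan is to reduce the global vanishing statement to a summand-by-summand check, and then to apply the three cases of Proposition \ref{prop_ext1}.

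First I would decompose $T$ according to the blocks: since $\mathcal{P}=\mathrm{add}(\mathcal{P}_1\cup\cdots\cup\mathcal{P}_m)$, write $T\simeq T_1\oplus\cdots\oplus T_m$ with $T_i\in\mathcal{P}_i$. Then $\mathrm{Ext}^1$ distributes over finite direct sums in both arguments, giving
\[
\ext{1}{R}{T}{T}\simeq\bigoplus_{i,u=1}^{m}\ext{1}{R}{T_i}{T_u}.
\]
By Lemma \ref{ext_1} every off-diagonal term vanishes, so the question reduces to the vanishing of each $\ext{1}{R}{T_i}{T_i}$.

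Next I would do the analogous decomposition on the $\overline{R}$-side. Under $F$, the summand $T_i$ is sent into $\mathrm{mod}(\overline{R}e_i)$ by Lemma \ref{fun_R/X}(2). Since Proposition \ref{factor_R/X} shows that $\overline{R}$ is the product of the block algebras $\overline{R}e_i$, a module supported on $e_i$ and one supported on $e_u$ with $i\neq u$ have no $\mathrm{Ext}^1$ between them. Thus
\[
\ext{1}{\overline{R}}{F(T)}{F(T)}\simeq\bigoplus_{i=1}^{m}\ext{1}{\overline{R}}{F(T_i)}{F(T_i)},
\]
and the problem becomes: for each fixed $i$, show $\ext{1}{R}{T_i}{T_i}=0$ if and only if $\ext{1}{\overline{R}}{F(T_i)}{F(T_i)}=0$.

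Finally I would write $T_i$ as a direct sum of indecomposables of the two types described in $\mathcal{P}_i$, namely $P_{ij}$'s and $\fac{i}{k}{l}$'s; correspondingly $F(T_i)$ is a direct sum of the $\lfac{i}{j}$'s and the same $\fac{i}{k}{l}$'s. Again using biadditivity of $\mathrm{Ext}^1$, the vanishing of $\ext{1}{R}{T_i}{T_i}$ is equivalent to the simultaneous vanishing of $\ext{1}{R}{X}{Y}$ for each pair of indecomposable summands $X,Y$, and similarly on the $\overline{R}$-side. I would then run through the four possible types of pairs:
$(P_{ij},P_{ij'})$ and $(P_{ij},\fac{i}{k}{l})$ give $0$ on the $R$-side by Proposition \ref{prop_ext1}(1), and the corresponding pairs on the $\overline{R}$-side give $0$ by the same statement; pairs of the form $(\fac{i}{k}{l},P_{ij})$ are handled by Proposition \ref{prop_ext1}(3), which equates the two vanishing conditions directly; and pairs $(\fac{i}{k}{l},\fac{i}{s}{t})$ are handled by the $K$-linear isomorphism of Proposition \ref{prop_ext1}(2). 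Putting these together establishes the equivalence for each pair and hence for $T_i$, completing the proof.

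The main conceptual point is not an obstacle so much as a bookkeeping one: one must be careful that $F$ matches indecomposables compatibly, which is guaranteed by Lemma \ref{fun_R/X}, and that the block decomposition of $\overline{R}$ actually forces vanishing of cross-block $\mathrm{Ext}^1$, which follows from Proposition \ref{factor_R/X}(2).
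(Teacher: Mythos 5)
Your argument is correct and follows exactly the route the paper intends: the paper's proof is the single sentence ``The assertion follows from Lemma \ref{ext_1} and Proposition \ref{prop_ext1},'' and what you have written is precisely the biadditivity-of-$\mathrm{Ext}^1$ bookkeeping (block decomposition via Lemma \ref{ext_1} and Proposition \ref{factor_R/X}, then the pairwise case analysis via Proposition \ref{prop_ext1}) that this sentence leaves implicit.
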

\begin{proof}
The assertion follows from Lemma \ref{ext_1} and Proposition \ref{prop_ext1}.
\end{proof}

We need the following well-known proposition which describes a very useful equivalent condition of tilting modules. 

\begin{prop}[\cite{ASS}]\label{equ_tilt} 
Let $R$ be an algebra and $T$ a partial tilting $R$-module.
Then the following are equivalent. 
\begin{enumerate}
\def\labelenumi{(\theenumi)}   
\item $T$ is a tilting module.
\item The number of pairwise nonisomorphic indecomposable direct summand of $T$ is equal to 
the number of pairwise nonisomorphic simple $R$-modules.
\end{enumerate}
\end{prop}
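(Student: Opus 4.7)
The plan is to prove this classical equivalence by combining a Grothendieck-group count (for (1) $\Rightarrow$ (2)) with Bongartz's completion construction (for (2) $\Rightarrow$ (1)).

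For (1) $\Rightarrow$ (2), write $T = T_1 \oplus \cdots \oplus T_t$ as a sum of pairwise nonisomorphic indecomposables and set $B := \mathrm{End}_R(T)$. The key claim is that $\{[T_i]\}_{i=1}^t$ is a $\mathbb{Q}$-basis of $K_0(\mathrm{mod}\,R) \otimes_{\mathbb{Z}} \mathbb{Q}$, whose rank is the number $n$ of nonisomorphic simple $R$-modules. The vanishing $\mathrm{Ext}^1_R(T,T) = 0$ identifies the $\mathrm{Hom}_R(T, T_i)$ as the full list of pairwise nonisomorphic indecomposable projective $B$-modules, so $B$ has exactly $t$ simples and its Cartan matrix is $(\dim_K \mathrm{Hom}_R(T_i, T_j))_{i,j}$. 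Linear independence of the $[T_i]$ then follows from the invertibility of this Cartan matrix (a consequence of $\mathrm{gl.dim}\,B < \infty$ for tilting $T$), by pairing with the Euler form $\langle -, T_j \rangle = \dim_K \mathrm{Hom}_R(-, T_j) - \dim_K \mathrm{Ext}^1_R(-, T_j)$, which is well-defined on $K_0$ because $\mathrm{pd}\,T_j \le 1$. Spanning follows from the tilting sequence $0 \to R \to T_0 \to T_1 \to 0$, which gives $[R] = [T_0] - [T_1]$, combined with the Brenner-Butler derived equivalence $D^b(\mathrm{mod}\,R) \simeq D^b(\mathrm{mod}\,B)$ identifying $\{[T_i]\}$ with the basis of $K_0(\mathrm{mod}\,B)$ up to base change through the Cartan matrix. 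Rank comparison yields $t = n$.

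For (2) $\Rightarrow$ (1), I would invoke Bongartz's completion lemma. Choose generators $f_1, \ldots, f_k$ of $\mathrm{Ext}^1_R(T, R)$ as a right $B$-module and form the universal extension
\[
0 \longrightarrow R \longrightarrow M \longrightarrow T^k \longrightarrow 0
\]
classified by $(f_1, \ldots, f_k)$. A direct verification, applying $\mathrm{Hom}_R(T, -)$ and $\mathrm{Hom}_R(-, T \oplus M)$ to this sequence, yields $\mathrm{pd}(T \oplus M) \le 1$ and $\mathrm{Ext}^1_R(T \oplus M, T \oplus M) = 0$, so $T \oplus M$ is a tilting module (its tilting sequence being the Bongartz extension itself). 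By the direction already proved, $T \oplus M$ has exactly $n$ pairwise nonisomorphic indecomposable summands. Since $T$ already has $n$ by hypothesis, every indecomposable summand of $M$ must already appear in $T$, i.e.\ $M \in \mathrm{add}\,T$, and the Bongartz sequence is then the required tilting sequence for $T$ in Definition \ref{def_tilt}(3).

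The main obstacle is the $K$-theoretic count in direction (1) $\Rightarrow$ (2), specifically the invertibility of the Cartan matrix of $\mathrm{End}_R(T)$ for tilting $T$. This is the substantive input from the Brenner-Butler theorem and cannot be extracted purely from manipulating the defining tilting sequence. The Bongartz construction in the other direction, by contrast, is mechanical once the universal extension is set up; the four Ext-vanishings for $T \oplus M$ follow from routine long-exact-sequence chases, the crucial one being that $\mathrm{Hom}_R(T, T^k) \to \mathrm{Ext}^1_R(T, R)$ is surjective by the choice of generators $f_1, \ldots, f_k$.
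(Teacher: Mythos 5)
The paper does not actually prove this proposition; it cites it from \cite{ASS}, and the proof supplied by that reference is precisely the two-pronged argument you sketch: a Grothendieck-group count for (1)$\Rightarrow$(2), Bongartz completion for (2)$\Rightarrow$(1). Your Bongartz direction is correct and standard.

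Your (1)$\Rightarrow$(2) direction, however, has a genuine gap. You assert that $\mathrm{gl.dim}\,B<\infty$ for $B=\mathrm{End}_R(T)$ with $T$ tilting, and from this deduce invertibility of the Cartan matrix of $B$, hence linear independence of the classes $[T_i]$ in $K_0(\mathrm{mod}\,R)\otimes\mathbb{Q}$. This is false in general: the tilting theorem only gives $\mathrm{gl.dim}\,B\le \mathrm{gl.dim}\,R+1$, which is vacuous when $\mathrm{gl.dim}\,R=\infty$ --- and that is precisely the regime of interest in this paper, since Harada algebras (e.g.\ all non-semisimple quasi-Frobenius algebras) have infinite global dimension. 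Taking $T=R$ already shows $B\cong R$, and the Cartan matrix of a quasi-Frobenius Nakayama algebra with all indecomposable projectives having the same composition factors is singular. In such cases the $[T_i]$ need not be linearly independent, and your pairing argument collapses.

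The repair is to avoid the Cartan matrix entirely: linear independence of the $[T_i]$ is not what is needed, and indeed need not hold. For a tilting module of projective dimension $\le 1$, Happel's theorem gives a triangle equivalence $D^b(\mathrm{mod}\,R)\simeq D^b(\mathrm{mod}\,B)$, hence an isomorphism $K_0(\mathrm{mod}\,R)\cong K_0(\mathrm{mod}\,B)$; thus $R$ and $B$ have the same number of simple modules. Meanwhile $\mathrm{Hom}_R(T,-)$ restricts to an equivalence $\mathrm{add}\,T\to\mathrm{proj}\,B$, so the number of pairwise nonisomorphic indecomposable summands of $T$ equals the number of indecomposable projective $B$-modules, which equals the number of simple $B$-modules. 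Comparing ranks yields $t=n$ without ever locating the classes $[T_i]$ inside $K_0(\mathrm{mod}\,R)$. With that replacement, your argument matches the standard one in \cite{ASS}.
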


Now we can prove Theorem \ref{main_thm}.

\begin{proof}
First we take a basic tilting $R$-module $T$. 
Then $T$ lies in $\mathcal{P}$ by Theorem \ref{classify_pd1}. So we can consider $F(T)$.
By Lemma \ref{fun_R/X}, we have that $F$ gives a bijection between isomorphism classes of $R$-modules whose projective dimension is at most one 
and those of $\overline{R}$-modules.
By Proposition \ref{equ_tilt} and the fact that the number of isomorphism classes of simple $R$-modules is equal to that of  $\overline{R}$-modules, 
we only have to show that $T \in \mathcal{P}$ satisfies $\ext{1}{R}{T}{T}=0$ if and only if $\ext{1}{\overline{R}}{F(T)}{F(T)}=0$. 
This follows from Proposition \ref{prop_ext}.
\end{proof}

\section{Combinatorial description of tilting $\utri{n}{K}$-modules}
First we recall the classification of basic tilting modules over the upper triangular matrix algebra $\utri{n}{K}$. 
Our classification should be well-known for experts \cite{TA,CCS,I,R}. 
Nevertheless we will give a complete proof since there does not seem to exist proper reference.
Next we explain how to classify basic tilting modules over a given left Harada algebra. 

We show the well-known classification of basic tilting $\utri{n}{K}$-modules 
by constructing a bijection between $\tilt{\utri{n}{K}}$ and the set of non-crossing partitions of the regular $n+2$-polygon into triangles. 

First we introduce coordinates in the AR-quiver of $\utri{n}{K}$ as follows.
\[
\begin{xy}
(0,0) *{_{(1,3)}}="A" ,
(10,10) *{_{(1,4)}}="B" ,
(20,0) *{_{(2,4)}}="C" ,
(40,40) *{_{(1,n+1)}}="D" ,
(50,50) *{_{(1,n+2)}}="E" ,
(60,40) *{_{(2,n+2)}}="F" ,
(50,30) *{_{(2,n+1)}}="G" ,
(90,10) *{_{(n-1,n+2)}}="H" ,
(80,0) *{_{(n-1,n+1)}}="I" ,
(100,0) *{_{(n,n+2)}}="J" ,
(30,0) *{}="K" ,
(65,0) *{}="L" ,

\ar "A" ; "B"
\ar "B" ; "C" \ar@{.} "B" ; "D"
\ar@{.} "C" ; "G"
\ar "D" ; "E" \ar "D" ; "G"
\ar "E" ; "F"
\ar "G" ; "F" \ar@{.} "G" ; "I"
\ar@{.} "F" ; "H"
\ar "I" ; "H"
\ar "H" ; "J"
\ar@{.} "K" ; "L"
\end{xy}
\]
We remark that the vertex $(i,j)$ corresponds the $\utri{n}{K}$-module
\[
M_{ij}=\stackrel{\hspace{23pt} \stackrel{j-2}{\check{}} \hspace{12pt} \stackrel{1}{\check{}}}
{(\begin{smallmatrix} 0 & \cdots & 0 & K & \cdots & K \end{smallmatrix})} /
\stackrel{\hspace{17pt} \stackrel{i}{\check{}} \hspace{24pt} \stackrel{1}{\check{}}}
{(\begin{smallmatrix} 0 & \cdots & 0 & K & \cdots K \end{smallmatrix})}
= \stackrel{\hspace{23pt} \stackrel{j-2}{\check{}} \hspace{15pt} \stackrel{i}{\check{}} \hspace{23pt} \stackrel{1}{\check{}}}
{(\begin{smallmatrix} 0 & \cdots & 0 & K & \cdots & K & 0 & \cdots & 0 \end{smallmatrix})} .
\]

Next we consider a regular $(n+2)$-polygon $R_{n+2}$ whose vertices are numbered as follows.
\[
\begin{xy}
(0,0) *{_1}="A" ,
(10,-3) *{_2}="B" ,
(17,-8) *{}="C" ,
(22,-15) *{}="CC" ,
(25,-25) *{}="CCC" ,
(22,-35) *{}="CCCC" ,
(17,-42) *{}="CCCCC" ,
(-10,-3) *{_{n+2}}="D" ,
(-17,-8) *{}="E" ,
(-22,-15) *{}="EE" ,
(-25,-25) *{}="EEE" ,
(-22,-35) *{}="EEEE" ,
(-17,-42) *{}="EEEEE" ,
(10,-47) *{_{i-1}}="F" ,
(0,-50) *{_i}="G" ,
(-10,-47) *{_{i+1}}="H" ,

\ar@{-} "A" ; "B"
\ar@{-} "D" ; "A"
\ar@{-} "F" ; "G"
\ar@{-} "G" ; "H"
\ar@{.} "B" ; "C"
\ar@{.} "C" ; "CC"
\ar@{.} "CC" ; "CCC"
\ar@{.} "CCC" ; "CCCC"
\ar@{.} "CCCC" ; "CCCCC"
\ar@{.} "CCCCC" ; "F"
\ar@{.} "D" ; "E"
\ar@{.} "E" ; "EE"
\ar@{.} "EE" ; "EEE"
\ar@{.} "EEE" ; "EEEE"
\ar@{.} "EEEE" ; "EEEEE"
\ar@{.} "EEEEE" ; "H"
\end{xy}
\]

We denote by  $D(R_{n+2})$ the set of all diagonals of $R_{n+2}$ except edges of $R_{n+2}$. 
We call a subset $S$ of $D(R_{n+2})$ a \emph{non-crossing partition}  of $R_{n+2}$ if $S$ satisfies the following conditions.
\begin{enumerate}
\def\labelenumi{(\theenumi)} 
\item Any two distinct diagonals in S do not cross except at their endpoints.
\item $R_{n+2}$ is divided into triangles by diagonals in $S$.
\end{enumerate}
We denote by $\mathcal{P}_{n+2}$ the set of an non-crossing partitions of $R_{n+2}$.

Now we construct the correspondence $\Phi$ from $\mathcal{P}_{n+2}$ to $\tilt{\utri{n}{K}}$. 
We take $S \in \mathcal{P}_{n+2}$.  
We remark that non-crossing partition of $R_{n+2}$ consists of $n-1$ diagonals. 
We denote by $(i,j)$ the diagonal between $i$ and $j$ for $i<j$ and put
\[
S = \{ (i_1,j_1), \ (i_2,j_2), \ \cdots, \ (i_{n-1},j_{n-1}) \}.
\]
Then we define
\[
\Phi(S):=M_{1,n+2} \oplus \left( \bigoplus_{k=1}^{n-1} M_{i_k,j_k} \right).
\]
It is shown that this is a basic tilting $\utri{n}{K}$-module.

\bigskip

Then the following hold.

\begin{thm}\label{tilt_utri}
The above correspondence $\Phi$ is a bijection.
\end{thm}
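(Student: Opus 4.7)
The plan is to verify three things: that $\Phi$ lands in $\tilt{\utri{n}{K}}$, that $\Phi$ is injective, and that $\Phi$ is surjective. The heart of the argument is a geometric dictionary relating $\mathrm{Ext}$ between the $M_{i,j}$ to the crossing of the labelling chords.

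First I would prove the dictionary: for two indecomposable $\utri{n}{K}$-modules $M_{i,j}$ and $M_{k,l}$,
\[
\ext{1}{\utri{n}{K}}{M_{i,j}}{M_{k,l}} \neq 0 \quad\Longleftrightarrow\quad i<k<j<l .
\]
Since $\utri{n}{K}$ is hereditary, $M_{i,j}$ admits a projective resolution of length one arising from the presentation of its support interval, and applying $\homo{\utri{n}{K}}{-}{M_{k,l}}$ expresses $\ext{1}{\utri{n}{K}}{M_{i,j}}{M_{k,l}}$ as the cokernel of an explicit map between one-dimensional Hom-spaces indexed by the indices $i,j,k,l$. A short case analysis gives exactly the interleaving condition displayed above. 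Combined with its symmetric version, this shows
\[
\ext{1}{\utri{n}{K}}{M_{i,j}\oplus M_{k,l}}{M_{i,j}\oplus M_{k,l}} \neq 0
\]
if and only if the chords $(i,j)$ and $(k,l)$ cross strictly in the interior of $R_{n+2}$; in particular the polygon edge $(1,n+2)$, which does not cross any chord, gives vanishing $\mathrm{Ext}$ against every $M_{k,l}$ in both directions (consistent with the fact that the corresponding module is projective-injective).

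Given this dictionary, the well-definedness of $\Phi$ is immediate. For $S\in\mathcal{P}_{n+2}$ the chords of $S$ are pairwise non-crossing by definition and $(1,n+2)$ is non-crossing with every chord, hence $\ext{1}{\utri{n}{K}}{\Phi(S)}{\Phi(S)}=0$; the bound $\pd{\Phi(S)}\leq 1$ is automatic because $\utri{n}{K}$ is hereditary; and $\Phi(S)$ has exactly $n$ pairwise non-isomorphic indecomposable summands (the $n-1$ diagonals of $S$ together with $M_{1,n+2}$), which equals the number of simple $\utri{n}{K}$-modules. Proposition \ref{equ_tilt} then yields $\Phi(S)\in\tilt{\utri{n}{K}}$. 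Injectivity of $\Phi$ is clear, since distinct triangulations produce distinct sets of summands.

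For surjectivity, I would take a basic $T\in\tilt{\utri{n}{K}}$, decompose $T=\bigoplus_{k=0}^{n-1}M_{p_k,q_k}$ (again $n$ indecomposable summands by Proposition \ref{equ_tilt}), and observe via the dictionary that the $n$ chords $(p_k,q_k)$ are pairwise non-crossing in $R_{n+2}$. Any maximal pairwise non-crossing family of \emph{proper} diagonals has size $n-1$ (such a family is a triangulation), so at least one of the $n$ chords must be a polygon edge; because the only polygon edge labelling an indecomposable $\utri{n}{K}$-module is $(1,n+2)$, exactly one summand is $M_{1,n+2}$ and the remaining $n-1$ chords are pairwise non-crossing proper diagonals, which automatically form a triangulation $S\in\mathcal{P}_{n+2}$ with $\Phi(S)=T$. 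The hard step in the whole argument is the $\mathrm{Ext}$-versus-crossing dictionary; once it is established, the remaining analysis is combinatorial bookkeeping built on Proposition \ref{equ_tilt}.
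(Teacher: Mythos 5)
Your proposal is correct and follows essentially the same strategy as the paper: establish the Ext-versus-crossing dictionary, use Proposition~\ref{equ_tilt} to count indecomposable summands, and match the $n$ summands against $M_{1,n+2}$ plus a triangulation. The one minor local variation is in showing that $M_{1,n+2}$ must be a summand of every tilting module: the paper records that $M_{1,n+2}$ is projective-injective (hence automatically a summand of any tilting module), whereas you deduce it from the combinatorial fact that $n$ pairwise non-crossing chords of an $(n{+}2)$-gon cannot all be proper diagonals; both routes are valid and equally short.
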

\begin{proof}
We divide the proof into five parts.

(i) One can easily check that the following conditions are equivalent for $(i,j) \neq (i',j')$.
\begin{itemize}
\item[(a)] $\ext{1}{R}{M_{i,j}}{M_{i',j'}}=0$ and $\ext{1}{R}{M_{i',j'}}{M_{i,j}}=0$.
\item[(b)] The diagonals $(i,j)$ and $(i',j')$ do not cross except at their endpoints.
\end{itemize}

(ii) For any $(i,j)$, we have that $\ext{1}{R}{M_{i,j}}{M_{i,j}}=0$.

(iii) $M_{1,n+2}$ is a projective injective $\utri{n}{K}$-module.

(iv) If $S$ is a non-crossing partition, then $\Phi(S)$ is a partial tilting module by (i),(ii) and (iii).
Since $\Phi(S)$ has $n$ non-isomophic indecomposable summands, it is a tilting module by Proposition \ref{equ_tilt}.

(v) Any basic tilting $\utri{n}{K}$-module $T$ has $M_{1,n+2}$ as a summand. 
Since $T$ has exactly $n$ indecomposable direct summands, there exists a subset $S$ of $D(R_{n+2})$ consists of $(n-1)$ elements 
such that $T=\Phi(S)$. By (i), $S$ is non-crossing partition of $R_{n+2}$.
\end{proof}

Theorem \ref{tilt_utri} gives a constructive bijection.

\begin{exam}
We consider $n=3$ case. We classify basic tilting $\utri{3}{K}$-modules by using Theorem \ref{tilt_utri}.
The partitions of the regular pentagon into triangles are given as follows.
\[
\begin{array}{ccccc}
(1) \begin{array}{c}
\def\objectstyle{\scriptscriptstyle}
\def\alphanum{\ifcase\xypolynode\or 2 \or 1 \or 5 \or 4\or 3 
\fi}
 \xy \xygraph{!{/r2.3pc/:}
[] !P5"A"{~>{} ~*{\alphanum} }
}
\ar@{-} "A2";"A1"
\ar@{-} "A3";"A2"
\ar@{-} "A4";"A3"
\ar@{-} "A5";"A4"
\ar@{-} "A1";"A5"
\ar@{-}"A2";"A4"
\ar@{-}"A2";"A5"
\endxy
\end{array}&
(2) \begin{array}{c}
\def\objectstyle{\scriptscriptstyle}
\def\alphanum{\ifcase\xypolynode\or 2 \or 1 \or 5 \or 4\or 3 
\fi}
\xy \xygraph{!{/r2.3pc/:}
[] !P5"A"{~>{} ~*{\alphanum} }
}
\ar@{-} "A2";"A1"
\ar@{-} "A3";"A2"
\ar@{-} "A4";"A3"
\ar@{-} "A5";"A4"
\ar@{-} "A1";"A5"
\ar@{-}"A1";"A3"
\ar@{-}"A1";"A4"
\endxy
\end{array}&
(3) \begin{array}{c}
\def\objectstyle{\scriptscriptstyle}
\def\alphanum{\ifcase\xypolynode\or 2 \or 1 \or 5 \or 4\or 3 
\fi}
\xy \xygraph{!{/r2.3pc/:}
[] !P5"A"{~>{} ~*{\alphanum} }
}
\ar@{-} "A2";"A1"
\ar@{-} "A3";"A2"
\ar@{-} "A4";"A3"
\ar@{-} "A5";"A4"
\ar@{-} "A1";"A5"
\ar@{-}"A2";"A5"
\ar@{-}"A5";"A3"
\endxy
\end{array}&
(4) \begin{array}{c}
\def\objectstyle{\scriptscriptstyle}
\def\alphanum{\ifcase\xypolynode\or 2 \or 1 \or 5 \or 4\or 3 
\fi}
\xy \xygraph{!{/r2.3pc/:}
[] !P5"A"{~>{} ~*{\alphanum} }
}
\ar@{-} "A2";"A1"
\ar@{-} "A3";"A2"
\ar@{-} "A4";"A3"
\ar@{-} "A5";"A4"
\ar@{-} "A1";"A5"
\ar@{-}"A2";"A4"
\ar@{-}"A4";"A1"
\endxy
\end{array}&
(5) \begin{array}{c}
\def\objectstyle{\scriptscriptstyle}
\def\alphanum{\ifcase\xypolynode\or 2 \or 1 \or 5 \or 4\or 3 
\fi}
\xy \xygraph{!{/r2.3pc/:}
[] !P5"A"{~>{} ~*{\alphanum} }
}
\ar@{-} "A2";"A1"
\ar@{-} "A3";"A2"
\ar@{-} "A4";"A3"
\ar@{-} "A5";"A4"
\ar@{-} "A1";"A5"
\ar@{-}"A3";"A1"
\ar@{-}"A3";"A5"
\endxy
\end{array}
\end{array}
\]
Therefore the number of basic tilting $\utri{3}{K}$-modules is equal to $5$ and all of basic tilting $\utri{3}{K}$-modules are given as follows.
\begin{enumerate}
\def\labelenumi{(\theenumi)}
\item $(\begin{smallmatrix} K&K&K  \end{smallmatrix}) \oplus 
(\begin{smallmatrix} 0&K&K  \end{smallmatrix}) \oplus
(\begin{smallmatrix} 0&0&K  \end{smallmatrix})$, 
\item $(\begin{smallmatrix} K&K&K  \end{smallmatrix}) \oplus
(\begin{smallmatrix} K&K&0  \end{smallmatrix}) \oplus
(\begin{smallmatrix} 0&K&0  \end{smallmatrix})$, 
\item $(\begin{smallmatrix} K&K&K  \end{smallmatrix}) \oplus
(\begin{smallmatrix} K&0&0  \end{smallmatrix}) \oplus
(\begin{smallmatrix} 0&0&K  \end{smallmatrix})$, 
\item $(\begin{smallmatrix} K&K&K  \end{smallmatrix}) \oplus
(\begin{smallmatrix} 0&K&K  \end{smallmatrix}) \oplus
(\begin{smallmatrix} 0&K&0  \end{smallmatrix})$, 
\item $(\begin{smallmatrix} K&K&K  \end{smallmatrix}) \oplus
(\begin{smallmatrix} K&K&0  \end{smallmatrix}) \oplus
(\begin{smallmatrix} K&0&0  \end{smallmatrix})$.
\end{enumerate}
\end{exam}
\bigskip

Now we show examples of classifications of tilting modules over left  Harada algebras.

\begin{exam}  \ 
\begin{enumerate}
\def\labelenumi{(\theenumi)} 
\item Let $R$ be a local quasi-Frobenius algebra. Then we consider  block extension (c.f. \cite{Lec_N,OHr3})
\[
R(n) = \left( \begin{array}{ccc}
R & \cdots  & R \\
 & \ddots & \vdots \\
 \jac{R} & & R
\end{array} \right)
\]
for $n \in \mathbb{N}$ of $R$ which is a subalgebra of $n \times n$ full matrix algebra over $R$. 
We can show that
\begin{itemize}
\item[(a)] the first row is a injective module,
\item[(b)] the $i$-th row is the Jacobson radical of the $(i-1)$-th row for $2 \leq i \leq n$.
\end{itemize}
In particular $R(n)$ is a left Harada algebra  with $m=1$ and $n_1=n$ in Definition \ref{def_Harada}. 

By Corollary \ref{main_cor}, we have a bijection $F: \tilt{R(n)} \longrightarrow \tilt{\utri{n}{K}}$.
We can obtain all basic tilting $R(n)$-modules from the definition of $F$ and Theorem \ref{tilt_utri}.
\item Let $R$ be a basic quasi-Frobenius algebra which has complete set of orthogonal primitive idempotents $\{e,f\}$. 
Then we can represent $R$ as follows (c.f. \cite{AF}).
\[
R \simeq \left( \begin{array}{ccc}
eRe & eRf \\
fRe & fRf 
\end{array} \right).
\]
We put $Q:=eRe$, $W:=fRf$, $A:=eRf$ and $B:=fRe$. 
Now we consider the block extension (c.f. \cite{Lec_N,OHr3})
\[
R(n_1,n_2)=\left( \begin{array}{ccc|ccc}
Q & \cdots  & Q & A & \cdots &A \\
 & \ddots & \vdots  & \vdots & & \vdots \\
 \jac{Q} & & Q & A& \cdots & A \\ \hline
B &\cdots & B & W & \cdots  & W \\
\vdots & & \vdots & & \ddots & \vdots \\
B & \cdots & B & \jac{W} & & W
\end{array} \right)
\]
for $n_1,n_2 \in \mathbb{N}$ of $R$ which is a subalgebra of $\enmo{R}{(eR)^{n_1} \oplus (fR)^{n_2}}$.
We can show that
\begin{itemize}
\item[(a)] the first and $n_1+1$ row are injective modules,
\item[(b)] the $i$-th row is the Jacobson radical of the $(i-1)$-th row for $2 \leq i \leq n$ and $n+2 \leq i \leq n+m$.
\end{itemize}
In particular $R(n_1,n_2)$ is a left Harada algebra with $m=2$ in Definition \ref{def_Harada}. 

By Corollary \ref{main_cor}, we have a bijection $F: \tilt{R(n_1,n_2)} \longrightarrow \tilt{\utri{n_1}{K}} \times \tilt{\utri{n_2}{K}}$.
We can obtain all basic tilting $R(n_1,n_2)$-modules from the definition of $F$ and Theorem \ref{tilt_utri}.
\end{enumerate}
\end{exam}

\bigskip

\end{document}